\renewcommand{\leq}{\leqslant}
\renewcommand{\geq}{\geqslant}
\newcommand{\R}{\mathds R}
\newcommand{\Dd}{\mathds D}
\newcommand{\Pp}{\mathds P}
\newcommand{\Ee}{\mathds E}
\newcommand{\I}{\mathds 1}
\def\d{{\rm d}}
\def\<{\langle}
\def\>{\rangle}
\def\mcr{\mathscr}
\newtheorem{theorem}{Theorem}[section]
\newtheorem{lemma}[theorem]{Lemma}
\newtheorem{proposition}[theorem]{Proposition}
\newtheorem{corollary}[theorem]{Corollary}
\theoremstyle{definition}
\newtheorem{example}[theorem]{Example}
\newtheorem{remark}[theorem]{Remark}
\numberwithin{equation}{section}
\begin{document}
\allowdisplaybreaks

\title[Exponential ergodicity for nonlinear branching processes] {Exponential ergodicity for general continuous-state nonlinear branching processes}

\author{
Pei-Sen Li\qquad\,
\and\qquad
Jian Wang}
\date{}
\thanks{\emph{P.-S.\ Li:}
Institute for Mathematical Sciences, Renmin University of China, Beijing 100872, P. R. China. \url{peisenli@ruc.edu.cn}}
\thanks{\emph{J.\ Wang:}
College of Mathematics and Informatics \& Fujian Key Laboratory of Mathematical Analysis and Applications (FJKLMAA), Fujian Normal University, 350007 Fuzhou, P.R. China. \url{jianwang@fjnu.edu.cn}}

\maketitle

\begin{abstract} By using the coupling technique, we present sufficient conditions for the exponential ergodicity of general continuous-state nonlinear branching processes in both the $L^1$-Wasserstein distance and the total variation norm, where the drift term is dissipative only for large distance, and either diffusion noise or jump noise is allowed to be vanished. Sufficient conditions for the corresponding strong ergodicity are also established.

\medskip

\noindent\textbf{Keywords:} continuous-state branching process; exponential ergodicity;
coupling; strong ergodicity

\noindent \textbf{MSC 2010:} 60G51; 60G52; 60J25; 60J75.
\end{abstract}

\section{Introduction}

In this paper we will study the exponential ergodicity and the strong ergodicity for general continuous-state \emph{nonlinear branching processes}, which will be introduced below.
Consider a filtered
probability space $(\Omega, \mathscr{F}, \mathscr{F}_t, \Pp)$ satisfying the usual hypotheses. Let
$\{B_t\}_{t\ge0}$ be an $(\mathscr{F}_t)$-Brownian motion. Throughout this paper, we write $\nu$ (which is allowed to be zero) for a $\sigma$-finite nonnegative measure on $(0,\infty)$ such that $\int^\infty_0 (z\wedge
z^2) \,\nu(\d z)<\infty.$ Let
$\{N(\d s,\d z,\d u): s,z,u>0\}$ be an independent
$(\mathscr{F}_t)$-Poisson random measure on $(0,\infty)^3$ with
intensities $\d s \,\nu(\d z)\,\d u$, and $\{\tilde N(\d s,\d z,\d
u): s,z,u>0\}$ be the corresponding compensated measure, i.e.,
$\tilde N(\d s,\d z,\d
u)= N(\d s,\d z,\d
u)-\d s \,\nu(\d z)\,\d u.$ We are concerned on a general continuous-state nonlinear branching process, which is described as the pathwise unique nonnegative solution to the following stochastic differential equation (SDE):
\begin{equation}\label{sdeB}\begin{split} X_t = & X_0+\int_0^t\gamma_0(X_s)\,\d
s+\int_0^t \sqrt{\gamma_1(X_{s})} \,\d
B_s\\
&+\int_0^t\int_{0}^\infty\int_0^{\gamma_2(X_{s-})} z\,\tilde{N}(\d
s, \d z, \d u). \end{split}\end{equation}Here, \begin{itemize}

\item
$x\mapsto \gamma_0(x)$ is a continuous function  on
$\R_+:=[0,\infty)$ such that $\gamma_0(0) \ge 0$;

\item
$x \mapsto \gamma_1(x)$ is a continuous function on $\R_+$
such that $\gamma_1(0)=0$ and $\gamma_1(x)\ge 0$ for $x>0$;

\item
$x\mapsto \gamma_2(x)$ is a continuous and non-decreasing function on $\R_+$
such that $\gamma_1(0)=0$.
 \end{itemize}
Intuitively, such process can be identified as a continuous-state branching process with
population-size-dependent branching rates and with competition.

If $\gamma_0(x)=a+bx$ for some $a\ge0$ and $b\in \R$ and $\gamma_i(x)=c_i x$ $(i=1,2)$ for some $c_1,c_2\ge0$, then the solution to (\ref{sdeB}) is reduced into the classical continuous-state branching process, see \cite{BeL00, Gre74, Ky, Lam67b, Li11} and references therein. We should mention that, only and if only in this particular case, the solution satisfies the so-called branching property, which means that different individuals act independently with each other. If $ \gamma_i(x) =c_ix$ $(i=1,2 )$ for some $c_i\ge0$ and $\gamma_0(x)=b_1x- b_2x^2$ with some $b_1, b_2>0$, then the solution to  \eqref{sdeB} is called the logistic branching process in the literature, which can be used to model the population dynamics with competition, see \cite{Eh, Lam05} for more details. The quadratic regulatory term in the coefficient $\gamma_0(x)$ has an ecological interpretation, as it describes negative interactions between each pair of individuals in the population. Similar equations with general coefficients $\gamma_0(x)$ to model more general competitions were considered in \cite{Par16}.

\ \

Throughout this paper we always assume that \emph{\eqref{sdeB}  has the unique
non-explosive strong solution}, which is denoted by $(X_t)_{t\ge0}$; see Subsection \ref{section2.1} for related discussions.
Let $P_t(x,\cdot)$ and $(P_t)_{t\ge 0}$  be the transition function and the transition semigroup of the process $(X_t)_{t\ge0}$, respectively. We are going to study the asymptotic behavior of the
$L^1$-Wasserstein distance and the total variation  distance between
$P_t(x,\cdot)$ and $P_t(y,\cdot)$ for any $x,y\in\R_+$. As a direct consequence, we will establish sufficient conditions for the exponential ergodicity and the strong ergodicity of the process $(X_t)_{t\ge0}$.

To the best of our knowledge, there are few known results on this topic. For the classical branching process (i.e.\ $\gamma_0(x)=a-bx$ and $\gamma_i(x)=c_i x$ $(i=1,2)$ for some $b>0$ and $a,c_i\ge0$), by the branching property, \cite[Theorem 2.4]{LM15} proved that the total variation  distance between $P_t(x,\cdot)$ and $P_t(y,\cdot)$ decays exponentially fast. Recently, under uniformly dissipative condition on $\gamma_0(x)$ (see Remark \ref{remark3.3}(1) below) and finite second moment condition on the measure $\nu$ (i.e.\ $\int_{\R_+}z^2\,\nu(\d z)<\infty)$, \cite[Theorem 4.2]{FJKR} established the exponential decay between $P_t(x,\cdot)$ and $P_t(y,\cdot)$ with respect to the $L^1$-Wasserstein distance.

\ \

To illustrate our main contributions, we present the following statement for the exponential ergodicity and the strong ergodocity of the process $(X_t)_{t\ge0}$. The reader can refer to Section \ref{Section3} for general results.

\begin{theorem}\label{t1} Let $(X_t)_{t\ge0}$ be the unique strong solution to the SDE \eqref{sdeB} such that assumptions below \eqref{sdeB} on the coefficients are satisfied. Suppose that there are constants $l_0,k_1\ge 0$ and $k_2>0$ such that
\begin{equation}\label{e:drift00}\gamma_0(x)-\gamma_0(y)\le \begin{cases}k_1(x-y)\log \left(\frac{4l_0}{x-y}\right),\quad &0\le x-y\le l_0,\\
-k_2(x-y),\quad & x-y>l_0.
\end{cases}\end{equation}
If one of the following three assumptions holds:
\begin{itemize}
\item[$(1)$] the function $\gamma_1(x)$ is continuous and strictly positive on $(0,\infty)$ such that \begin{equation}\label{e:rem1}\liminf_{x\to 0}\frac{\gamma_1(x)}{x^\beta}>0\end{equation} for some $\beta\in [1,2)$;
 \item[$(2)$]  there are constants $\alpha\in (0,2)$ and $c_0>0$ such that
 $$\nu(\d z)\ge c_0\I_{\{0<z\le 1\}}z^{-1-\alpha}\,\d z,$$   and the function $\gamma_2(x)$ is  continuous and strictly positive on $(0,\infty)$ such that $$\liminf_{x\to 0}\frac{\gamma_2(x)}{x^\beta}>0$$ for some $\beta\in [\alpha-1,\alpha)\cap(0,\infty)$;
\item[$(3)$] there are constants $\alpha\in (1,2)$ and $c_0>0$ such that
\begin{equation}\label{e:eeefg}\int_0^r z^2\,\nu(\d z)\ge c_0 r^{2-\alpha},\quad 0<r\le 1,\end{equation} and the function  $$\gamma_2(x)=b_2 x^{r_2}+\gamma_{2,2}(x),$$ where $b_2>0$, $r_2\in [1,\alpha)$ and $\gamma_{2,2}(x)$ is a non-decreasing function on $\R_+$;
\end{itemize} then the process $(X_t)_{t\ge0}$ is exponentially ergodic both in the $W_1$-distance and the total variation  norm.

Furthermore, if \eqref{e:drift00} is replaced by \begin{equation}\label{e:drift01}\gamma_0(x)-\gamma_0(y)\le \begin{cases}k_1(x-y)\log \left(\frac{4l_0}{x-y}\right),\quad &0\le x-y\le l_0,\\
-k_2(x-y)^\delta,\quad & x-y>l_0
\end{cases}\end{equation} for some $\delta>1$, then, under one of three assumptions $(1)$--$(3)$ above, the process $(X_t)_{t\ge0}$ is strongly ergodic.
\end{theorem}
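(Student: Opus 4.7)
The plan is to prove the theorem by a coupling argument combined with Itô's formula applied to a carefully chosen concave test function. Since $\gamma_2$ is non-decreasing, there is a natural order-preserving coupling $(X_t, Y_t)$ of two solutions of \eqref{sdeB} started from $x \ge y$: both are driven by the same Brownian motion and by the same Poisson random measure, with the jump of $X$ coming from $\{u < \gamma_2(X_{s-})\}$ and that of $Y$ from $\{u < \gamma_2(Y_{s-})\}$; since these sets are nested, $Z_t := X_t - Y_t \ge 0$ for all $t$. Exponential $W_1$-ergodicity is thereby reduced to showing $\Ee \phi(Z_t) \le C e^{-\lambda t} \phi(Z_0)$ for an increasing concave $\phi$ comparable to the identity.

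Itô's formula gives a drift contribution $(\gamma_0(X_s)-\gamma_0(Y_s))\phi'(Z_s)$ which by \eqref{e:drift00} is at most $k_1 Z_s \log(4l_0/Z_s)\phi'(Z_s)$ on $\{Z_s \le l_0\}$ and at most $-k_2 Z_s \phi'(Z_s)$ on $\{Z_s > l_0\}$. The idea is to choose $\phi$ of Lamperti type so that its second derivative, combined with the second-order contribution from the noise, exactly cancels the logarithmic Dini drift on $\{Z_s \le l_0\}$ and leaves a strictly negative residual. The main obstacle is that the noise can degenerate as $Z_s \to 0$, so this cancellation is delicate; this is exactly what hypotheses $(1)$--$(3)$ are designed to handle. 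In case $(1)$, the Brownian term contributes $\frac{1}{2}(\sqrt{\gamma_1(X_s)}-\sqrt{\gamma_1(Y_s)})^2 \phi''(Z_s)$, which by \eqref{e:rem1} and elementary estimates for $(\sqrt{a}-\sqrt{b})^2$ is of order $Z_s^\beta |\phi''(Z_s)|$ near the origin, and since $\beta < 2$ this dominates the Dini term. In case $(2)$, the Poisson jumps over the overlap region $\{\gamma_2(Y_{s-}) < u < \gamma_2(X_{s-})\}$ produce a second-order term which, under the $\alpha$-stable lower bound on $\nu$ together with $\gamma_2(x) \gtrsim x^\beta$ for $\beta \in [\alpha-1,\alpha)$, is similarly strong enough. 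In case $(3)$, the quadratic small-jump lower bound $\int_0^r z^2\,\nu(\d z) \ge c_0 r^{2-\alpha}$ combined with $\gamma_2(x) \ge b_2 x^{r_2}$ and $r_2 < \alpha$ plays the same role.

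Once $\mathcal{L}\phi(Z) \le -\lambda \phi(Z)$ outside a bounded set, Gronwall yields the exponential $W_1$-contraction $W_1(P_t(x,\cdot), P_t(y,\cdot)) \le C e^{-\lambda t}|x-y|$, and a standard Lyapunov estimate with $V(x)=1+x$ (whose generator is controlled by $\gamma_0$ since $\gamma_0(x) \le \gamma_0(0) - k_2 x$ for $x > l_0$) yields tightness and a unique invariant measure. To pass to the total variation norm, after the paths enter $\{Z \le l_0\}$ in bounded expected time, a second coupling stage is run in which the genuinely regularizing component of the noise makes $X$ and $Y$ coincide in a.s.\ finite time with exponential tail: in case $(1)$ via a reflection/mirror coupling of the Brownian motion, in cases $(2)$--$(3)$ via the overlap of Poisson intensities. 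Combining the two stages produces exponential decay in total variation.

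Finally, under the enhanced dissipativity \eqref{e:drift01} with $\delta > 1$, the Foster--Lyapunov function $V(x)=1+x$ satisfies $\mathcal{L}V(x) \le -\eta x^\delta + K$, which produces $\sup_x \Ee_x \tau_C < \infty$ for any compact set $C \subset \R_+$ and hence strong ergodicity via the Down--Meyn--Tweedie criterion.
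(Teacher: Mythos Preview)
Your proposal has a genuine gap in the coupling construction that breaks the argument in cases (1) and (2).

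You couple both copies synchronously: same Brownian motion, same Poisson random measure. For the diffusion part this gives the second-order coefficient $(\sqrt{\gamma_1(X_s)}-\sqrt{\gamma_1(Y_s)})^2$, and you assert this is of order $Z_s^\beta$ ``near the origin'' by \eqref{e:rem1}. But $Z_s=X_s-Y_s$ small does \emph{not} force $X_s,Y_s$ to be small. Take $\gamma_1(x)=x$: then $(\sqrt{x}-\sqrt{y})^2=(x-y)^2/(\sqrt{x}+\sqrt{y})^2$, which for $x=N$, $y=N-\tfrac12$ is of order $1/N$, not $(x-y)^\beta=(\tfrac12)^\beta$. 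So your diffusion term can be arbitrarily small relative to the Dini drift $k_1 Z_s\log(4l_0/Z_s)$, and the cancellation you need on $\{Z_s\le l_0\}$ fails. The paper fixes this by using \emph{reflection} coupling for the Brownian part ($Y$ driven by $-B$ until the coupling time), which produces $(\sqrt{\gamma_1(X_s)}+\sqrt{\gamma_1(Y_s)})^2\ge \gamma_1(X_s)+\gamma_1(Y_s)$; the lower bound $\gamma_1(x)+\gamma_1(y)\ge k_3(x-y)^\beta$ then follows from \eqref{e:rem1} and strict positivity of $\gamma_1$ on $(0,\infty)$ regardless of how large $x,y$ are.

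The same problem occurs in case (2). Your only jump contribution comes from the region $\{\gamma_2(Y_{s-})<u<\gamma_2(X_{s-})\}$, whose intensity is $(\gamma_2(X_s)-\gamma_2(Y_s))\nu(\d z)$. But hypothesis (2) only gives $\gamma_2(x)\gtrsim x^\beta$ near $0$ and strict positivity elsewhere; it says nothing about the \emph{increment} $\gamma_2(x)-\gamma_2(y)$, which can be tiny for large $x,y$ with $x-y$ small (e.g.\ $\gamma_2$ constant on $[1,\infty)$). The paper instead uses the \emph{refined basic coupling} for the jump part: on the common intensity region $\{u<\gamma_2(Y_{s-})\}$ it lets $Y$ jump by $z\pm(X_{s-}-Y_{s-})_\kappa$ with rates governed by $\mu_{\pm(x-y)_\kappa}=\nu\wedge(\delta_{\pm(x-y)_\kappa}*\nu)$, producing a term $\tfrac12\gamma_2(y)\,\mu_{(x-y)_\kappa}(\R_+)[\psi(r+r_\kappa)+\psi(r-r_\kappa)-2\psi(r)]$. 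Under the stable-like lower bound on $\nu$ one has $\mu_z(\R_+)\ge C_* z^{-\alpha}$, and since $\gamma_2(y)$ itself (not its increment) is bounded below, this supplies the needed negative second-order term. Only in case (3), where the structural assumption $\gamma_2(x)=b_2x^{r_2}+\gamma_{2,2}(x)$ with $r_2\ge 1$ genuinely yields $\gamma_2(x)-\gamma_2(y)\ge b_2(x-y)^{r_2}$, would your synchronous jump coupling suffice; the paper notes this explicitly (Remark~\ref{remark2.4}).

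In short: the overall architecture (concave test function, Gronwall, then a regularizing step for total variation) is right, but the specific coupling you chose is too weak. Replace the synchronous Brownian coupling by reflection and, for case (2), the synchronous jump coupling by the refined basic coupling; then your estimates go through essentially as in the paper.
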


  \eqref{e:drift00} on the drift term $\gamma_0(x)$ for $0<x-y\le l_0$ is the standard one-sided non-Lipschitz continuous condition, while that for $x-y\ge l_0$ means that $\gamma_0(x)$ satisfies the dissipative condition for large distances (since $l_0$ is allowed to be any positive constant). By taking $\nu(\d z)= c_0|z|^{-1-\alpha}\I_{\{z>0\}}\,\d z$ for some $c_0>0$ and $\alpha\in (0,2)$, one can regard condition (2) as the extension of (1) from the Brownian motion case to the one-sided $\alpha$-stable noise case. When $\alpha\in (1,2)$, condition (3) on the measure $\nu$ is much weaker than condition (2); for example, \eqref{e:eeefg} is satisfied for the singular measure $\nu(\d z):=\sum_{j=0}^\infty 2^{\alpha j}\delta_{2^{-j}}(\d z)$ with $\alpha\in (1,2)$. In this case, it is at price of requiring a stronger assumption on the coefficient $\gamma_2(x)$. According to Theorem \ref{t1}, we can see that the logistic branching process (i.e.,  $ \gamma_i(x) =c_ix$ $(i=1,2 )$ for some $c_i\ge0$ and $\gamma_0(x)=b_1x- b_2x^2$ with some $b_1, b_2>0$) is strongly ergodic; see Example \ref{Exm} below for more general coefficients $\gamma_0(x)$ satisfying \eqref{e:drift01}.

In the following, we will remark that conditions \eqref{e:rem1} and \eqref{e:drift01} are sharp in some concrete examples.

\begin{remark}
(1)
Let $\gamma_0(x)=-x^2$, $\gamma_1(x)= 2x^2$ and $\gamma_2(x)=0$; that is,
$$L=x^2\frac{\d^2}{\d x^2}-x^2\frac{\d}{\d x}.$$ Define $
\mu(\d x)={x^{-2}e^{-x}}\,\d x.
$
One can verify that for any $f\in C^2_b(\R_+)$, $
\mu(Lf)=0,
$
which implies that $\mu(\d x)$ is an invariant measure for the operator $L$.
However,
$$
\mu(\R_+)=\int_0^\infty {x^{-2}e^{-x}} \,\d x\ge e^{-1}\int_{0}^1 {x^{-2}}\,\d x=\infty.
$$ On the other hand, according to \cite[the case (i)-(ib) after Example 2.18, p.\ 14]{LYZ}, we know that $\Pp^x(\tau_0=\infty)=1$ for all $x>0$, where $\tau_0=\inf\{t>0: X_t=0\}$. This is, the point $0$ can be seen as the reflection boundary for the diffusion process $(X_t)_{t\ge0}$ associated with the operator $L$ on $[0,\infty)$.
Therefore, the process $(X_t)_{t\ge0}$ is not ergodic, see e.g. \cite[Table 5.1, p.\ 100]{Chen}.
Note that, for this example,  (\ref{e:rem1}) is satisfied with $\beta=2$, and so this implies that \eqref{e:rem1} with $\beta<2$ in Theorem \ref{t1} is optimal.

(2) Let $\gamma_0(x)=d-bx$ with $b, d>0$, $\gamma_1(x)=\sqrt{2c}x$ with $c>0$ and $\gamma_2(x)=0$. Then, the solution to (\ref{sdeB}) is reduced into the famous Cox-Ingersoll-Ross (CIR) model. In this case, one can easily see that (\ref{e:drift00}) and (1) in Theorem \ref{t1} hold. Therefore, the CIR model is exponentially ergodic in both the $W_1$-distance and the total variation  distance.
On the other hand, denote by $\tau_1=\inf\{t\ge0: X_t=1\}$. According to  \cite[Corollary 9]{DuFoMa14},  for any $x>1$,
$$
\Ee^x[\tau_1]=\int^\infty_0\frac{e^{-z}-e^{-xz}}{bz+cz^2}\exp\left(\int^z_0\frac{d}{bu+cu^2}\,\d u\right)\,\d z.
$$
 By letting $x\to\infty$ in the above equality, we can conclude that $\sup_{x>1}\Ee^x[\tau_1]=\infty$. This together with \cite[Lemma 2.1]{Mao02} yields that the CIR model is not strongly ergodic. In particular, this implies that \eqref{e:drift01} with $\delta>1$ for the strong ergodicity in some sense is sharp.\end{remark}

The approach of our paper is based on recent developments of the couplings for SDEs with L\'evy noises via coupling operators, see \cite{LW, LSW, LW19, LW18, W} for more details. However, there are a few essential differences between continuous-state nonlinear branching processes and the settings of \cite{LW, LSW, LW19, LW18, W}. For example, in the present setting, the diffusion term and the jump noise are allowed to appear together in the SDE \eqref{sdeB}, and moreover both coefficients $\gamma_1(x)$ and $\gamma_2(x)$ are degenerate on $\R_+$ (since $\gamma_1(0)=\gamma_2(0)=0$). The differences require much more effort than those as in  \cite{LW, LSW, LW19, LW18, W} to efficiently apply the coupling technique. In particular, we need consider the coupling operator that contains both local part and non-local part of the associated generator \eqref{opeB}, which makes the coupling function (e.g., see \eqref{lem-test-funct.1} and \eqref{lem-test-funct.222}) in the applications of coupling process more complex and delicate than that in \cite{LW, LSW, LW19, LW18, W}.

\ \

The remainder of this paper is arranged as follows. In Section \ref{sec2}, we recall some results from \cite{FL10} on the strong solution to the SDE \eqref{sdeB}, and then present a Markovian coupling of the solution through the construction of a new coupling operator. General results on the exponential ergodiciy and the strong ergodicity for the SDE \eqref{sdeB} are stated in  Section \ref{Section3}. The proofs of all main results in Section \ref{Section3} and Theorem \ref{t1} are given in the last section.

\section{Unique strong solution and its coupling process}\label{sec2}
This section consists of two parts. We first recall results from \cite{FL10} on the existence and the uniqueness of the strong solution to the SDE \eqref{sdeB}, and then construct a new Markovian coupling of the solution.

\subsection{Existence and uniqueness of strong solution}\label{section2.1}
The statement is  taken form \cite[Theorem 5.6]{FL10}.

\begin{theorem}\label{T:solu} {\rm (\cite[Theorem 5.6]{FL10})}  Suppose that the coefficients $\gamma_i$, $i=0,1,2,$ satisfy the following conditions:
 \begin{itemize}
\item[{\rm (1)}] there is a constant $K>0$ so that
$$\gamma_0(x)\le K(1+x),\quad x\ge0;$$
\item[{\rm (2)}]  there exists a non-decreasing function $L(x)$ on $\R_+$ such that $$\gamma_1(x)\le L(x),\quad x\ge0;$$

\item[{\rm (3)}] the function $\gamma_2(x)$ is nonnegative and non-decreasing on $\R_+;$

\item[{\rm (4)}] $\gamma_0(x)= \gamma_{0,1}(x) - \gamma_{0,2}(x)$, where $\gamma_{0,1}(x)$ is continuous on $\R_+$, and $\gamma_{0,2}(x)$ is continuous and
non-decreasing on $\R_+$. For each integer $m\ge 1$ there is a non-decreasing concave function
$r_m(x)$ on $\R_+$ such that $\int_{0}^1 r_m(z)^{-1}\, \d z
= \infty$, and for all $0\le x,y\le m$,
$$
|\gamma_{0,1}(x)-\gamma_{0,1}(y)| \le r_m(|x-y|);
$$

\item[{\rm(5)}] for each integer $m\ge1$ there is a nonnegative and non-decreasing  function  $\rho_m(x)$ on $\R_+$ such that $\int_{0}^1 \rho_m(z)^{-2}\, \d z
= \infty$, and for all $0\le x,y\le m$,
$$
|\sqrt{\gamma_1(x)}-\sqrt{\gamma_1(y)}|^2+ |\gamma_2(x) - \gamma_2(y) |\le  \rho_m(|x-y|)^2 .
$$
\end{itemize}
Then, for any initial value $X_0=x\ge0$, there exists
a unique strong solution to the SDE {\rm(\ref{sdeB})}, and  the solution is a strong Markov process $(X_t)_{t\ge0}$ with the generator given by \begin{equation}\label{opeB} Lf(x) =
\gamma_0(x)f^\prime(x)+\frac{\gamma_1(x)}{2}f^{\prime\prime}(x) +
\gamma_2(x)\int_0^\infty \big(f(x+z) - f(x) -
zf^\prime(x)\big)\,\nu(\d z) \end{equation} for any $f\in C_b^2(\R_+)$.
\end{theorem}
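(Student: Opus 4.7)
The plan is to follow the Yamada--Watanabe scheme adapted to jump SDEs, exactly as in \cite{FL10}: prove pathwise uniqueness for two solutions driven by the same Brownian and Poisson noises, prove weak existence via Lipschitz approximation with uniform moment bounds, and combine the two via the Yamada--Watanabe theorem for SDEs with Poisson noise to produce a non-explosive strong solution. In this layout, hypothesis (1) is the linear growth that delivers non-explosion, (4) is the Yamada--Watanabe modulus for the drift after the monotone piece $\gamma_{0,2}$ is discarded by sign, and (5) is the matching modulus for both $\sqrt{\gamma_1}$ and $\gamma_2$.

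For pathwise uniqueness, suppose $(X_t)$ and $(Y_t)$ solve \eqref{sdeB} on the same stochastic basis with the same initial value and localize by $\tau_m = \inf\{t \ge 0 : X_t \vee Y_t \ge m\}$. Construct smooth $\phi_k \in C^2(\R)$ with $\phi_k(x) \uparrow x^+$, $0 \le \phi_k'(x) \le \I_{\{x > 0\}}$, and $\phi_k''$ supported in a shrinking interval $[a_{k+1}, a_k]$ determined by $\int_{a_{k+1}}^{a_k} \rho_m(z)^{-2}\,\d z = k$, so that $\phi_k''(z)\rho_m(z)^2 \le 2/k$ on its support. Apply Ito's formula to $\phi_k(X_t - Y_t)$ up to $t \wedge \tau_m$: the $\gamma_{0,2}$-drift has the right sign and is dropped; the $\gamma_{0,1}$-drift is bounded by $r_m(|X_s - Y_s|)$ via (4) since $\phi_k' \le 1$; the quadratic-variation contribution is $O(1/k)$ via (5) and the construction of $\phi_k$; and the jump term is split in $u$ into a synchronous region $\{u \le \gamma_2(X_{s-}) \wedge \gamma_2(Y_{s-})\}$, handled by smoothness of $\phi_k$ together with Poisson compensation (another $O(1/k)$ piece), and an exclusive region, controlled by $|\gamma_2(X_{s-}) - \gamma_2(Y_{s-})|\int (z \wedge z^2)\,\nu(\d z) \le C\rho_m(|X_s - Y_s|)^2$ via (5). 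Sending $k \to \infty$ and applying Bihari's inequality, which is legitimate because $\int_0^1 r_m(z)^{-1}\,\d z = \int_0^1 \rho_m(z)^{-2}\,\d z = \infty$, forces $X_{t \wedge \tau_m} = Y_{t \wedge \tau_m}$ a.s.; letting $m \uparrow \infty$ gives pathwise uniqueness globally.

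For weak existence, I would truncate and mollify $\gamma_0, \gamma_1, \gamma_2$ to globally Lipschitz approximations $\gamma_i^{(n)}$ preserving the linear growth constant, solve the regularized SDE by Picard iteration, and derive uniform moment bounds $\sup_n \Ee \sup_{s \le T} X_s^{(n)} \le C_T(1 + X_0)$ from hypothesis (1) and $\int (z \wedge z^2)\,\nu(\d z) < \infty$ via a Gronwall argument applied to $\Ee X^{(n)}_{t}$ after compensation. These estimates yield both non-explosion in the limit and tightness of $\{X^{(n)}\}$ on the Skorokhod space $D([0,\infty);\R_+)$; any subsequential limit solves the martingale problem associated with \eqref{opeB} on $C_b^2(\R_+)$, and a standard representation argument identifies it as a weak solution to \eqref{sdeB}. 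Combined with pathwise uniqueness, Yamada--Watanabe then delivers a unique non-explosive strong solution. The strong Markov property follows from pathwise uniqueness and the independent stationary increments of $(B, N)$, while Ito's formula applied to $f \in C_b^2(\R_+)$ reads off the generator on $C_b^2(\R_+)$ as \eqref{opeB}.

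The main obstacle I anticipate is the jump-term estimate in the pathwise-uniqueness step: because $\gamma_2(X_{s-})$ enters as the upper limit of the $u$-integration rather than as a prefactor of the jump amplitude, the two processes' jump integrals do not subtract cleanly and one cannot factor out $|X - Y|$. The partition of the $u$-domain into the synchronous and exclusive regions, where the non-decreasing structure of $\gamma_2$ required in (3) and the $\rho_m^2$-bound in (5) together reduce the exclusive part to the same Yamada--Watanabe modulus that governs the diffusion, is precisely what makes hypothesis (5) on $\gamma_2$ itself, rather than on some transformation of it, the natural assumption.
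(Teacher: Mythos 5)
The paper does not prove this statement at all: it is imported wholesale from \cite[Theorem~5.6]{FL10} and stated without argument in Subsection~2.1, so there is no "paper's own proof" to compare against. Judged on its own, your reconstruction is a faithful sketch of the Fu--Li argument and follows essentially the same route that \cite{FL10} takes: a Yamada--Watanabe scheme for SDEs with state-dependent jump intensity, with pathwise uniqueness via the moduli in (4)--(5) and weak existence via approximation and tightness, glued together by a Yamada--Watanabe theorem for Poisson-driven SDEs. Your diagnosis that the delicate point is the jump term, because $\gamma_2$ enters as the $u$-upper limit so the jump integrals of $X$ and $Y$ do not subtract cleanly, is exactly right; so is the split of the $u$-domain into synchronous and exclusive parts, and the observation that the non-decreasing structure in (3) is what keeps the difference process from crossing zero at a jump.

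One piece of your exclusive-region estimate is stated loosely enough to be misread. You bound the compensated drift correction by $\lvert\gamma_2(X_{s-})-\gamma_2(Y_{s-})\rvert\int(z\wedge z^2)\,\nu(\d z)$, but the inner integrand $\ell_z(\zeta):=\phi_k(\zeta+z)-\phi_k(\zeta)-z\phi_k'(\zeta)$ is \emph{not} bounded by $C(z\wedge z^2)$ with a $k$-uniform constant: the second-order Taylor bound gives $\ell_z(\zeta)\le \tfrac{z^2}{2}\phi_k''(\xi)$ for some $\xi\in(\zeta,\zeta+z)$, and $\|\phi_k''\|_\infty\to\infty$. What actually closes the estimate is exactly the property defining $\phi_k$: since $\rho_m$ is non-decreasing, $\phi_k''(\xi)\rho_m(\zeta)^2\le\phi_k''(\xi)\rho_m(\xi)^2\le 2/k$, so the product with the $\rho_m(\lvert\zeta\rvert)^2$-bound from (5) gives $\ell_z(\zeta)\,\lvert\gamma_2(X)-\gamma_2(Y)\rvert\le z^2/k$ for small $z$, while for $z\ge 1$ one uses $\ell_z\le z$ and $\int_1^\infty z\,\nu(\d z)<\infty$ to land on a Bihari-admissible $\rho_m(\lvert\zeta\rvert)^2$ term. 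Your final paragraph already alludes to this ("the exclusive part reduces to the same Yamada--Watanabe modulus that governs the diffusion"), so the idea is present; just be aware that the displayed intermediate inequality is not the right uniform bound, and the $\phi_k''\rho_m^2\le 2/k$ control must be invoked explicitly for the small-jump part rather than absorbed into a constant $C$.
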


To investigate the exponential ergodicity of the process $(X_t)_{t\ge0}$,  we will assume that the drift term $\gamma_0(x)$ is dissipative for large
distance, see \eqref{e:drift} below. One can see that condition (1) in Theorem \ref{T:solu} holds with $K=\sup_{0\le r\le l_0}\Phi_1(r)$ under \eqref{e:drift}. On the other hand, we suppose that the function $\gamma_1(x)$ is continuous on $\R_+$
such that $\gamma_1(0)=0$. Hence, condition (2) in Theorem \ref{T:solu} holds with $L(x):=\sup_{0\le y\le x} \gamma_1(y)$. We have already supposed that condition (3) is satisfied, see assumptions below the SDE \eqref{sdeB}.
Therefore, in the setting of our paper,  the SDE \eqref{sdeB} has a unique strong solution under assumptions of Theorem \ref{Th:W} (or Theorem \ref{Th:W1}), and some locally continuous assumptions on the coefficients $\gamma_i(x)$ for all $i=0,1,2$ (e.g.\  conditions (4) and (5) in Theorem \ref{T:solu}).

\subsection{Markovian coupling for continuous-state nonlinear branching process}
To study the coupling of the process $(X_t)_{t\ge0}$ determined by \eqref{sdeB}, we  begin with the construction of a new coupling operator for its generator $L$ given by \eqref{opeB}.
Recall that an operator $\tilde{L}$ acting on $C^2(\R^2_+)$ is called a \emph{coupling} of $L$ given by \eqref{opeB}, if for any $f, g\in C^2(\R_+)$,
$$
\tilde{L}h(x,y)=Lf(x)+Lg(y),
$$ where $h(x,y)=f(x)+g(x)$ for $x,y\in\R_+$.
In this paper, we will use the coupling by reflection
of the local part and the refined basic coupling of the non-local part
for the operator $L$. Note that the function $\gamma_2(x)$ is non-decreasing on $\R_+$. For a given parameter
$\kappa>0$, set $x_{\kappa}=x\wedge\kappa$ for $x>0$. Roughly speaking, when
$x>y\ge0$, the refined basic coupling of the non-local part for the operator $L$ is given by
$$
(x,y)\longrightarrow\begin{cases}(x+z,y+z+(x-y)_\kappa),\quad& \frac{1}{2}\gamma_2(y)\mu_{-(x-y)_{\kappa}}(\d z),\\
(x+z,y+z- (x-y)_{\kappa}), \quad & \frac{1}{2}\gamma_2(y)\mu_{(x-y)_{\kappa}}(\d z),\\
(x+z, y+z), \quad & \gamma_2(y)\Big[\nu(\d z)-\frac{1}{2}\mu_{-(x-y)_{\kappa}}(\d z)-\frac{1}{2}\mu_{(x-y)_{\kappa}}(\d z)\Big],\\
(x+z, y), \quad &[\gamma_2(x)-\gamma_2(y)]\nu(\d z),
\end{cases}
$$ where \begin{equation}\label{e:measure0}\mu_x=\nu\wedge(\delta_x*\nu)(\d z)\end{equation} for all $x\in \R$. Similarly, we can define the case that $0\le x<y$. See \cite[Section 2]{LW18} for more details on the refined basic coupling for SDEs with L\'evy jumps.
Then,  for any $f\in C^2(\R_+^2)$ and $x>y\ge0$, we define
\begin{equation}\label{generator}\begin{split}
\tilde{L}f(x,y) =&\gamma_0(x)f'_x(x,y)+\gamma_0(y)f'_y(x,y) \\
&+\frac{1}{2}\gamma_1(x)f''_{xx}(x,y)+\frac{1}{2}\gamma_1(y)f''_{yy}(x,y)-\sqrt{\gamma_1(x)\gamma_1(y)}f''_{xy}(x,y)\\
&+\frac{1}{2}\gamma_2(y)\int^\infty_0(f(x+z,y+z+(x-y)_\kappa)-f(x,y)\\
&\qquad\qquad\qquad\,\,-f_x'(x,y)z-f_y'(x,y)(z+(x-y)_\kappa))\,\mu_{-(x-y)_{\kappa}}(\d z)\\
&+\frac{1}{2}\gamma_2(y)\int^\infty_0(f(x+z,y+z-(x-y)_\kappa)-f(x,y)\\
&\qquad\qquad\qquad\,\,-f_x'(x,y)z-f_y'(x,y)(z-(x-y)_\kappa))\,\mu_{(x-y)_{\kappa}}(\d z)\\
&+ \gamma_2(y)\int^\infty_0(f(x+z,y+z)-f(x,y)-f_x'(x,y)z\\
&\qquad\qquad\quad\,\,\, \,-f_y'(x,y)z)\,\Big(\nu -\frac{1}{2}\mu_{-(x-y)_{\kappa}} -\frac{1}{2}\mu_{(x-y)_{\kappa}}\Big)(\d z) \\
&+(\gamma_2(x)-\gamma_2(y))\int^\infty_0(f(x+z,y)-f(x,y)-f_x'(x,y)z)\,\nu(\d z).
\end{split}
\end{equation} Here and in what follows, $f'_x(x,y)=\frac{\partial f(x,y)}{\partial x}$, $f''_{xx}(x,y)=\frac{\partial^2 f(x,y)}{\partial x^2}$ and $f''_{xy}(x,y)=\frac{\partial^2 f(x,y)}{\partial x\partial y}$, and so on. Similarly, we can define $\tilde L f(x,y)$ for the
case that $0\le x<y$. By using the fact that
$\mu_x=\delta_x*\mu_{-x}$ for any $x\in \R$ (see \cite[Corollary A.2]{LW18}), one can check that the generator $\tilde{L}$ constructed above is a coupling operator of $L$ given by \eqref{opeB}; see \cite[Subsection 2.1]{LW18}.

Next, we will construct the SDE on $\R_+^2$ associated with the
coupling operator $\tilde L$ defined above, and  prove the existence
of the strong solution to the corresponding SDE.
The idea below is partly motivated by \cite[Subsection 2.2]{LW18}.
According to \cite[Corollary A.2 and Remark 2.1]{LW18},
$\mu_x=\delta_x*\mu_{-x}$, and \begin{equation}\label{e:note}\mu_x(\R_+)=\mu_{-x}(\R_+)\le
2\nu(\{z\in \R_+: z>|x|/2\})<\infty\end{equation} for all $x\in \R$. Recalling $\mu_x=\nu\wedge(\delta_x*\nu)(\d z)$, we define the following control function
$$
\rho(x,z)=\frac{\mu_x(\d z)}{\nu(\d z)}\in[0,1],\quad x\in \R,\,z\in \R_+
$$
with $\rho(0,z)=1$ by convention. Consider the following SDE:
\begin{equation}\label{sdeD}\begin{split}\begin{cases} X_t =  & x+\displaystyle\int_0^t\gamma_0(X_s)\,\d
s+\displaystyle\int_0^t \sqrt{\gamma_1(X_{s})} \,\d
B_s\\
&+\displaystyle\int_0^t\int_{0}^\infty\int_0^{\gamma_2(X_{s-})} z\,\tilde{N}(\d
s, \d z, \d u)\\
 Y_t = & y+\displaystyle\int_0^t\gamma_0(Y_s)\,\d
s-\displaystyle\int_0^t \sqrt{\gamma_1(Y_{s})} \,\d
B^*_s\\
&+\displaystyle\int_0^t\int_{0}^\infty\int_0^{\frac{1}{2}\gamma_2(Y_{s-})\rho(-(U_{s-})_\kappa,
z)} [z+(U_{s-})_{\kappa}]\,\tilde{N}(\d
s, \d z, \d u)\\
&+\displaystyle\int_0^t\int_{0}^\infty\int^{\frac{1}{2}\gamma_2(Y_{s-})[\rho(-(U_{s-})_\kappa,
z)+\rho((U_{s-})_\kappa,
z)]}_{\frac{1}{2}\gamma_2(Y_{s-})\rho(-(U_{s-})_\kappa, z)}
[z-(U_{s-})_{\kappa}]\,\tilde{N}(\d
s, \d z, \d u)\\
&+\displaystyle\int_0^t\int_{0}^\infty\int_{\frac{1}{2}\gamma_2(Y_{s-})[\rho(-(U_{s-})_\kappa,
z)+\rho((U_{s-})_\kappa, z)]}^{\gamma_2(Y_{s-})} z\,\tilde{N}(\d s,
\d z, \d u),\end{cases}
\end{split}\end{equation} where $$
B^*_t=\begin{cases}-B_t,\quad& t\le {T},\\
-2B_{{T}}+B_{t}, \quad & t>{T},
\end{cases}
$$
${T}=\inf\{t\ge0: X_t=Y_t\}$, and $U_t=X_t-Y_t$.

\begin{proposition}
For any $(x,y)\in \R_+^2$, the system of equations \eqref{sdeD} is
well defined, and has a unique strong solution $(X_t, Y_t)_{t\ge 0}$. Moreover, we have
\begin{itemize}
\item[{\rm(1)}] the infinitesimal generator of the process $(X_t, Y_t)_{t\ge 0}$
is just the coupling operator $\tilde{L}$ defined by \eqref{generator}.
\item[{\rm(2)}] $X_t=Y_t$ for all $t\ge T$, where
$T=\inf\{t>0:X_t=Y_t\}.$\end{itemize}
\end{proposition}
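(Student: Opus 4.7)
The plan is to follow the refined-basic-coupling construction of \cite[Subsection 2.2]{LW18}, but adapted to the present setting where the diffusion term and the jump term coexist, and where $\gamma_2$ is only nonnegative non-decreasing (and $\gamma_1(0)=\gamma_2(0)=0$). The three claims will be handled in sequence: existence/uniqueness of the joint strong solution, identification of the generator via It\^o's formula, and the sticking property at $T$.

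First, I would verify that \eqref{sdeD} is well posed. The $X$-component of \eqref{sdeD} coincides with \eqref{sdeB}, so Theorem \ref{T:solu} already produces a unique non-explosive strong solution $(X_t)_{t\ge 0}$ adapted to the filtration generated by $B$ and $N$. For the $Y$-component I would first note the consistency of the jump-rate decomposition: since $\rho(\pm x, z)\in[0,1]$ and $\tfrac12\rho(-x,z)+\tfrac12\rho(x,z)\le 1$, the three $u$-intervals appearing in \eqref{sdeD} are disjoint with total length $\gamma_2(Y_{s-})$, so the $Y$-marginal jump intensity is exactly $\gamma_2(Y_{s-})\nu(\d z)$; together with $\mu_x=\nu\wedge(\delta_x\ast\nu)$ and $\mu_x(\R_+)<\infty$ from \eqref{e:note}, the integrands are finite-rate perturbations of the marginal driver. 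To handle the circularity caused by $B^*$ depending on $T$ and by $U_{s-}=X_{s-}-Y_{s-}$ appearing in the $Y$-integrands, I would solve on the interval $[0,T\wedge\sigma_n]$, where $\sigma_n$ is the localising sequence of exit times of $X+Y$ from $[0,n]$. On this interval $B^*=-B$, so no circularity remains; the structural assumptions (4)--(5) in Theorem \ref{T:solu}, together with the Lipschitz dependence of $(U_{s-})_\kappa$ in $Y_{s-}$ on $\{|U_{s-}|\le\kappa\}$ and its constancy on the complement, allow a Yamada--Watanabe-type pathwise uniqueness argument for the joint system, as in \cite{FL10}. Existence then follows by the same approximation (truncation/Euler) scheme used in Theorem \ref{T:solu}. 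Setting $Y_t:=X_t$ for $t>T$ (justified in the third step) and pasting yields a global strong solution.

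Second, I would identify the infinitesimal generator with $\tilde L$ by applying It\^o's formula to $f(X_t,Y_t)$ for $f\in C_b^2(\R_+^2)$. The drift part produces the $\gamma_0$-terms; the Brownian part produces the local part of \eqref{generator}, where the mixed term $-\sqrt{\gamma_1(X_t)\gamma_1(Y_t)}f''_{xy}$ arises from $\d\langle X,Y\rangle_t = -\sqrt{\gamma_1(X_t)\gamma_1(Y_t)}\,\d t$ (valid on $[0,T]$ by $B^*=-B$, and trivially on $(T,\infty)$ since $Y=X$ makes the mixed covariation equal to $\gamma_1(X_t)\,\d t=\sqrt{\gamma_1(X_t)\gamma_1(Y_t)}\,\d t$ with the correct sign after collapsing the two diffusion terms into one). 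For the Poisson part, I would split the $u$-integral over the three disjoint regions and then substitute the defining identity $\rho(x,z)\nu(\d z)=\mu_x(\d z)$ together with $\mu_x=\delta_x\ast\mu_{-x}$ from \cite[Corollary~A.2]{LW18}. This converts each slice of the $Y$-jump term into the $\mu_{\pm(x-y)_\kappa}$ integral appearing in \eqref{generator}, while the remaining $X$-only contribution, coming from $u\in[\gamma_2(Y_{s-}),\gamma_2(X_{s-})]$ (when $X_{s-}>Y_{s-}$), produces the last line of \eqref{generator}.

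Third, the sticking property is immediate from the construction. At $t=T$ one has $U_T=0$; substituting $Y_s=X_s$ into \eqref{sdeD} for $s>T$ yields $(U_{s-})_\kappa=0$ and $\rho(0,z)=1$, so the first two jump terms of the $Y$-equation merge into a single compensated integral over $u\in[0,\gamma_2(Y_{s-})]$ with integrand $z$, while the third disappears. Since $B^*_t-B^*_T=B_t-B_T$ for $t\ge T$, the $Y$-equation restricted to $[T,\infty)$ becomes \eqref{sdeB} with initial datum $Y_T=X_T$, and pathwise uniqueness for \eqref{sdeB} forces $Y_t=X_t$ for all $t\ge T$.

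The main technical obstacle is the first step: the $Y$-jump integrands depend on $Y_{s-}$ not only through $\gamma_2(Y_{s-})$ but also through $(U_{s-})_\kappa$ and $\rho(\pm(U_{s-})_\kappa,z)$, which is strongly nonlinear and degenerates as $U_{s-}\to 0$ (since $\rho(0,z)=1$ while $\rho(x,z)\to 0$ as $|x|\to\infty$). Verifying that the Fu--Li framework extends to this two-dimensional coupled system, in particular producing pathwise uniqueness uniformly up to the coupling time, is the delicate point; the argument relies crucially on $\mu_x$ being a finite measure uniformly for $|x|\le\kappa$ so that these jump contributions can be absorbed by the Yamada--Watanabe moduli controlling the marginal coefficients.
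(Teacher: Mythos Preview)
Your treatment of assertions (1) and (2) --- the It\^o computation and the sticking argument via pathwise uniqueness of \eqref{sdeB} --- is essentially what the paper does. The divergence is entirely in the first step.

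You propose to establish existence and uniqueness for the joint system \eqref{sdeD} by extending the one-dimensional Yamada--Watanabe framework of \cite{FL10} to two dimensions, and you correctly flag this as the main obstacle. It is a real one: the $Y$-integrands depend on $Y_{s-}$ through $\rho(\pm(U_{s-})_\kappa,z)=\d\mu_{\pm(U_{s-})_\kappa}/\d\nu(z)$, and for a general L\'evy measure $\nu$ this Radon--Nikodym derivative need have no useful regularity in its first argument. The Fu--Li moduli control the marginal coefficients $\gamma_i$, not this extra nonlinearity, so the assertion that their argument ``allows'' a joint pathwise-uniqueness proof is not justified as written.

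The paper sidesteps this entirely by using the very observation you already made --- that the $\pm(U_{s-})_\kappa$ corrections are finite-rate because $\mu_x(\R_+)<\infty$ --- but in a constructive rather than analytic way. It first rewrites the $Y$-equation so that the compensated jump part is exactly the marginal one, $\int_0^t\int_0^\infty\int_0^{\gamma_2(Y_{s-})}z\,\tilde N(\d s,\d z,\d u)$, while the $\pm(U_{s-})_\kappa$ contributions appear only as \emph{uncompensated} corrections with respect to $N$ (this is equation \eqref{e:ooofff}, using $\mu_x(\R_+)=\mu_{-x}(\R_+)$). Then it builds $Y$ by interlacing: between correction jumps, $Y$ evolves according to the marginal SDE \eqref{sdeD0}, whose strong solution is already guaranteed by the standing hypothesis on \eqref{sdeB}; at each correction time $\sigma_k$ one adds or subtracts $(U_{\sigma_k-})_\kappa$ by hand according to which $u$-slice the Poisson point falls into. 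Since the correction rate $\tfrac12\gamma_2(Y_{s-})[\mu_{(U_{s-})_\kappa}(\R_+)+\mu_{-(U_{s-})_\kappa}(\R_+)]$ is bounded on $\{t<\tau_m\}$ with $\tau_m=\inf\{t:Y_t>m\text{ or }|U_t|<1/m\}$, only finitely many modifications occur before $t\wedge\tau_m$, and letting $m\to\infty$ gives the global solution. No two-dimensional uniqueness theory is needed at all; only the already-assumed one-dimensional result is invoked.
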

\begin{proof}
Recall that in the setting of our paper, we always assume that
(\ref{sdeB}) has a non-explosive and pathwise unique strong solution
$(X_t)_{t\ge 0}$. We are going to show that the sample paths of
$(Y_t)_{t\ge 0}$ given in \eqref{sdeD} can be obtained by repeatedly modifying those of
the solution of the following equation:
\begin{equation}\label{sdeD0}\begin{split}
 Z_t = & y+\int_0^t\gamma_0(Z_s)\,\d
s-\int_0^t \sqrt{\gamma_1(Z_{s})} \,\d
B^*_s+\int_0^t\int_{0}^\infty\int_0^{\gamma_2(Z_{s-})} z\,\tilde{N}(\d
s, \d z, \d u).
\end{split}\end{equation}  By the definition of $(B^*_t)_{t\ge0}$, we
can verify that $(B_t^*)_{t\ge0}$ is still an
$\mathscr{F}_t$-Brownian motion. Since the driving Poisson random measure for \eqref{sdeB} and \eqref{sdeD0} is same, the existence of the strong solution $(Z_t)_{t\ge0}$ to the equation (\ref{sdeD0}) is guaranteed by the pathwise unique strong solution to \eqref{sdeB}.

We first claim that the process $(Y_t)_{t\ge0}$ given in \eqref{sdeD} is the same as
\begin{equation}\label{e:ooofff}\begin{split} Y_t = & y+\displaystyle\int_0^t\gamma_0(Y_s)\,\d
s-\displaystyle\int_0^t \sqrt{\gamma_1(Y_{s})} \,\d
B^*_s\\
&+\int_0^t\int_{0}^\infty\int_0^{\gamma_2(Y_{s-})} z\,\tilde{N}(\d
s, \d z, \d u)\\
&+\displaystyle\int_0^t (U_{s-})_{\kappa}\int_{0}^\infty\int_0^{\frac{1}{2}\gamma_2(Y_{s-})\rho(-(U_{s-})_\kappa,
z)}\,{N}(\d
s, \d z, \d u)\\
&-\displaystyle\int_0^t (U_{s-})_{\kappa}\int_{0}^\infty\int^{\frac{1}{2}\gamma_2(Y_{s-})[\rho(-(U_{t-})_\kappa,
z)+\rho((U_{t-})_\kappa,
z)]}_{\frac{1}{2}\gamma_2(Y_{s-})\rho(-(U_{s-})_\kappa, z)}
\,{N}(\d
s, \d z, \d u).\end{split}\end{equation} Indeed, this immediately follows from the fact that for all $z>0$, $\mu_z(\R_+)=\mu_{-z}(\R_+)<\infty$, and the identity that for any $x,y\in \R_+$ with $x\neq y$,
\begin{align*}\int_0^\infty \int_0^{\frac{1}{2}\gamma_2(y)\rho(-(x-y)_\kappa,
z)}\,\d u \,\nu(\d z)=&\frac{1}{2}\gamma_2(y)\mu_{-(x-y)_\kappa}(\R_+)=\frac{1}{2}\gamma_2(y)\mu_{(x-y)_\kappa}(\R_+)\\
=&\int_0^\infty\int_{\frac{1}{2}\gamma_2(y)\rho(-(x-y)_\kappa,
z)}^{\frac{1}{2}\gamma_2(y)[\rho(-(x-y)_\kappa,
z)+\rho((x-y)_\kappa
z)]}\,\d u \,\nu(\d z).\end{align*} Hence, we next turn to construct the sample paths of
$(Y_t)_{t\ge 0}$ given in \eqref{e:ooofff}.

Let $(Z_t^{(1)})_{t\ge 0}$ be the solution to (\ref{sdeD0}) with $Z_0^{(1)}=y$. Denote
by $(p_t)_{t\in \Dd_p}$ the Poisson point process associated with the Poisson random measure $N(\d
s, \d z, \d u)$ on $(0, \infty)^2$, and by $\Delta X_t=X_t-X_{t-}$. Let $R^{(1)}_t=R^{(1)}_{1,t}+R^{(1)}_{2,t},$
where $$
R^{(1)}_{1,t}:=\frac{1}{2}\gamma_2(Z^{(1)}_{t}) \rho(-(X_{t}-Z^{(1)}_{t})_\kappa,
\Delta X_t),\quad R^{(1)}_{2,t}:=\frac{1}{2}\gamma_2(Z^{(1)}_{t})\rho((X_{t}-Z^{(1)}_{t})_\kappa, \Delta X_t).
$$
Define the stopping times $T_1=\inf\{t>0: Z^{(1)}_t=X_t\}$, and
$$
\sigma_1=\inf\left\{t\in \Dd_p:
p_t\in(0,\infty)\times(0,R_t^{(1)}]\right\}.
$$ 
We consider two cases:

(i) On the event $\{T_1\le\sigma_1\},$ we set $Y_t=Z^{(1)}_t$ for
all $t< T_1$; moreover, by the pathwise uniqueness of (\ref{sdeB}),
we can define $Y_t=X_t$ for $t\ge T_1.$

(ii) On the event $\{T_1> \sigma_1\}$, we define $Y_t=Z^{(1)}_t$ for
all $t<\sigma_1$ and
$$
Y_{\sigma_1}=Z^{(1)}_{\sigma_1-}+\Delta X_{\sigma_1}
+\begin{cases}(X_{\sigma_1-}-Y_{\sigma_1-})_\kappa,\qquad& p_{\sigma_1}\in (0,\infty)\times(0, R^{(1)}_{1,t}],\\
-(X_{\sigma_1-}-Y_{\sigma_1-})_\kappa, \qquad &
p_{\sigma_1}\in (0,\infty)\times(R^{(1)}_{1,t},
R^{(1)}_t].
\end{cases}
$$

In the following, we will restrict on the event $\{T_1> \sigma_1\}$ and consider the
following SDE: \begin{equation*}\begin{split}
 Z_t = & Y_{\sigma_1}+\int_{\sigma_1}^t\gamma_0(Z_s)\,\d
s-\int_{\sigma_1}^t \sqrt{\gamma_1(Z_{s })} \,\d
B^*_s\\
&+\int_{\sigma_1}^t\int_0^\infty\int_0^{\gamma_2(Z_{s-})}
z\,\tilde{N}(\d s, \d z, \d u),\qquad t>\sigma_1.
\end{split}\end{equation*}
Denote its solution by $(Z^{(2)}_t)_{t\ge \sigma_1}$. Similarly, we
set $R^{(2)}_t=R^{(2)}_{1,t}+R^{(2)}_{2,t}$ with
$$
R^{(2)}_{1,t}:=\frac{1}{2}\gamma_2(Z^{(2)}_{t-})\rho(-(X_{t-}-Z^{(2)}_{t-})_\kappa,
\Delta X_t),\quad R^{(2)}_{2,t}:=\frac{1}{2}\gamma_2(Z^{(2)}_{t-})\rho((X_{t-}-Z^{(2)}_{t-})_\kappa, \Delta X_t)]$$ for all $
t>\sigma_1.
$ We further define
$T_2=\inf\{t>0: Z^{(2)}_t=X_t\}$, and
$$
\sigma_2=\inf\left\{t\in \Dd_p\cap (\sigma_1,\infty):
p_t\in(0,\infty)\times(0, R^{(2)}_t]\right\}.
$$ 
In the same way, we can define $Y_t$ for $t\le\sigma_2.$ We then repeat
this procedure. Note that
$$
\frac{1}{2}\gamma_2(Y_{t-})[\mu_{-(X_{t-}-Y_{t-})_\kappa(\R_+)}+\mu_{(X_{t-}-Y_{t-})_\kappa(\R_+)}]
$$
is uniformly bounded (thanks to \eqref{e:note}) for any $t<\tau_m$ with $m=1,2,\dots$, where $$\tau_m=\inf\{t\ge
0: Y_t>m~\mbox{or}~ |X_t-Y_t|<1/m\}.$$ Then only finite many
modifications have to be made in the interval $(0, t\wedge \tau_m)$.
Finally, by letting $m\to\infty$, we can determine the unique strong
solution $(Y_t)_{t\ge 0}$ to the SDE \eqref{e:ooofff} globally.

With the construction of $(Y_t)_{t\ge0}$ above, we can apply the It\^{o} formula to the SDE \eqref{sdeD} to obtain the assertion (1). The assertion (2) immediately follows from the SDE \eqref{sdeD} and the assumption that
(\ref{sdeB}) has a non-explosive and pathwise unique strong solution
$(X_t)_{t\ge 0}$.
\end{proof}

In the following, we call $(X_t,Y_t)_{t\ge0}$ determined by \eqref{sdeD} a (Markovian) coupling process of  $(X_t)_{t\ge0}$.
To conclude this part, we will give the preserving order property of the coupling process $(X_t,Y_t)_{t\ge0}$.

\begin{corollary}\label{L:order}Let $(X_t,Y_t)_{t\ge0}$ be the coupling process determined by \eqref{sdeD} and with the starting point $(x,y)$. If $x>y$, then $X_t\ge Y_t$ for all $t>0$ a.s.\end{corollary}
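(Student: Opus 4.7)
The plan is to set $U_t := X_t - Y_t$ and show that $U_t \ge 0$ for all $t > 0$ almost surely. Since assertion $(2)$ of the preceding proposition gives $U_t = 0$ for every $t \ge T := \inf\{s > 0 : X_s = Y_s\}$, it suffices to prove $U_t > 0$ on $\{0 \le t < T\}$ almost surely (recalling $U_0 = x - y > 0$).

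First I would analyze the jumps of $U$ at a time $s < T$, where necessarily $U_{s-} > 0$ and $(U_{s-})_\kappa = U_{s-} \wedge \kappa \in (0, U_{s-}]$. Reading off the $X$- and $Y$-equations in \eqref{sdeD}, there are precisely four possible jump mechanisms at such a time: $(\mathrm{i})$ $(X,Y)$ jumps by $(z, z+(U_{s-})_\kappa)$, giving $\Delta U_s = -(U_{s-})_\kappa$; $(\mathrm{ii})$ $(X,Y)$ jumps by $(z, z-(U_{s-})_\kappa)$, giving $\Delta U_s = +(U_{s-})_\kappa$; $(\mathrm{iii})$ $(X,Y)$ jumps by $(z, z)$, giving $\Delta U_s = 0$; and $(\mathrm{iv})$ only $X$ jumps by a positive amount $z$ (corresponding to $u \in (\gamma_2(Y_{s-}), \gamma_2(X_{s-})]$), giving $\Delta U_s = z > 0$. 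Types $(\mathrm{ii})$--$(\mathrm{iv})$ trivially give $U_s \ge U_{s-} > 0$, while type $(\mathrm{i})$ gives $U_s = (U_{s-} - \kappa)^+ \ge 0$, with equality only when $U_{s-} \le \kappa$; but in that case $X_s = Y_s$, forcing $T \le s$ and contradicting $s < T$. Hence every jump at a time $s < T$ leaves $U_s > 0$.

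Between jumps, on $\{t < T\}$ one has $B^*_t = -B_t$, so the continuous part of $U$ evolves by $\d U_t^c = [\gamma_0(X_t) - \gamma_0(Y_t)]\,\d t + [\sqrt{\gamma_1(X_t)} - \sqrt{\gamma_1(Y_t)}]\,\d B_t$, which is pathwise continuous, so any sign change of $U$ on an inter-jump interval must pass through $0$, again forcing $T \le t$. A standard first-entrance argument applied to $\tau := \inf\{t > 0 : U_t \le 0\}$ then yields $\tau \ge T$, and combining this with $U_t = 0$ for $t \ge T$ gives $U_t \ge 0$ for all $t > 0$ a.s.

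The main subtle step is the first-entrance argument itself: on the hypothetical event $\{\tau < T\}$ one picks a sequence $t_n \uparrow \tau$ with $U_{t_n} > 0$, which forces $U_{\tau -} = 0$, and one must verify that no jump at $\tau$ can drag $U$ strictly below zero. This is immediate once one observes that when $U_{s-} = 0$ all four jump mechanisms degenerate to $\Delta U_s = 0$ (the offsets $\pm (U_{s-})_\kappa$ vanish and $\gamma_2(X_{s-}) - \gamma_2(Y_{s-}) = 0$). Therefore $U_\tau = U_{\tau-} = 0$, whence $T \le \tau$, contradicting the assumption $\tau < T$ and completing the proof.
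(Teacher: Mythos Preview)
Your approach is correct in substance and genuinely different from the paper's. The paper does not analyze the SDE pathwise at all: it defines $\tilde T=\inf\{t>0:Y_t>X_t\}$, takes smooth test functions $f_n\geq 0$ with $f_n(x,y)=0$ on $\{y<x\}$ and $f_n(x,y)=1$ on $\{y\geq x+1/n\}$, and checks directly from the form of the coupling generator \eqref{generator} that $\tilde L f_n(x,y)=0$ whenever $x\geq y$ (since every jump of the coupling from $\{x>y\}$ lands in $\{x\geq y\}$ and $f_n$ vanishes there). Dynkin's formula then gives $\Ee^{(x,y)}f_n(X_{t\wedge\tilde T},Y_{t\wedge\tilde T})=0$, and Fatou yields $\Pp^{(x,y)}(\tilde T<t)=0$. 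This generator-based argument is short and robust: it never needs to disentangle the continuous and jump parts of $U_t$.

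Your pathwise route is more elementary and makes the order-preservation mechanism visible (each of the four jump types from $U_{s-}>0$ lands in $[0,\infty)$, with $0$ attainable only by the type-(i) jump that simultaneously realizes $T$). Two points deserve tightening. First, the phrase ``between jumps'' is not well-defined when $\nu((0,\infty))=\infty$: the type-(iv) jumps (only $X$ jumps) can occur on a dense set of times. This does not break the argument, since those jumps are all positive; you should simply drop the inter-jump language and argue directly on $\tau=\inf\{t>0:U_t\leq 0\}$ by the dichotomy $U_{\tau-}>0$ (then $\Delta U_\tau$ must be one of your four types, each giving $U_\tau\geq 0$ with $U_\tau=0$ forcing $T\leq\tau$) versus $U_{\tau-}=0$ (then $\Delta U_\tau=0$ as you observe, again $T\leq\tau$). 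Second, your sentence ``which forces $U_{\tau-}=0$'' is not quite right: $U_{t_n}>0$ with $t_n\uparrow\tau$ only gives $U_{\tau-}\geq 0$, and the case $U_{\tau-}>0$ is the one handled by your jump classification, not the degenerate case. Finally, a cosmetic remark: from \eqref{sdeD} as written and $B^*_t=-B_t$ on $\{t\leq T\}$, the Brownian increment of $U$ is actually $[\sqrt{\gamma_1(X_t)}+\sqrt{\gamma_1(Y_t)}]\,\d B_t$ (this is the reflection coupling, consistent with the $-\sqrt{\gamma_1(x)\gamma_1(y)}f''_{xy}$ term in \eqref{generator}); you wrote the difference. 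This has no effect on your argument, which only uses continuity of the continuous martingale part.
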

\begin{proof} Denote by $\Pp^{(x,y)}$ and $\Ee^{(x,y)}$ the probability and the expectation of the process $(X_t,Y_t)_{t\ge0}$ starting from $(x,y)$, respectively.
Let $$\tilde T:=\inf\{t>0: Y_t>X_t\},$$ and define  $
f_n\in C_b^2(\R_+^2)$ such that $f_n\ge0$,  $f_n(x,y)=1$ if $y\ge x+1/n$, and
$f_n(x,y)=0$ if $y<x$. Then, for any $x>y$ and $t>0$,
\begin{align*}\Ee^{(x,y)}f_n(X_{t\wedge \tilde T},Y_{t\wedge \tilde T})=&
f_n(x,y)+\Ee^{(x,y)} \left(\int_0^{t\wedge \tilde T} \tilde
Lf_n(X_s,Y_s)\,\d s\right)=0,\end{align*} where in the last equality
we used the fact that $\tilde L f_n(x,y)=0$ for all $x\ge y$, thanks
to the definition of the coupling operator $\tilde L$ given by \eqref{generator} and the assumption that the function $\gamma_2(x)$ is non-decreasing on $\R_+$. Then, by the
Fatou lemma,
$$\Pp^{(x,y)}(\tilde T<t)=\Ee^{(x,y)}\liminf_{n\to\infty}f_n(X_{t\wedge \tilde T},Y_{t\wedge
\tilde T})\le \liminf_{n\to \infty}\Ee^{(x,y)}  f_n(X_{t\wedge
\tilde T},Y_{t\wedge \tilde T})=0.$$ Therefore, for any $x>y$,
$$\Pp^{(x,y)}(\tilde T=\infty)=1.$$ That is, for any $x>y$, the coupling
process $(X_t,Y_t)_{t\ge0}$ associated with the coupling operator
$\tilde L$ satisfies that $X_t\ge Y_t$ for all $t>0$ a.s.
\end{proof}

\begin{remark}\label{remark2.4} One can apply the synchronous coupling (instead of the refined basic coupling) of the non-local part
to construct another coupling operator for the operator $L$. For any $x>y\ge0,$ the synchronous
coupling of the non-local part for the operator $L$ given by \eqref{opeB} is given by
$$(x,y)\longrightarrow\begin{cases}(x+z,y+z),\quad& \gamma_2(y)\,\nu(\d z),\\
(x+z,y), \quad &(\gamma_2(x)-\gamma_2(y))\,\nu(\d z).\end{cases}$$
Then, for any $f\in C^2(\R_+^2)$ and $x>y\ge0$, the corresponding coupling operator $L^*$ is defined by
\begin{align*}
L^*f(x,y) =&\gamma_0(x)f'_x(x,y)+\gamma_0(y)f'_y(x,y)(x,y)\\
&+\frac{1}{2}\gamma_1(x)f''_{xx}(x,y)+\frac{1}{2}\gamma_1(y)f''_{yy}(x,y)-\sqrt{\gamma_1(x)\gamma_1(y)}f''_{xy}(x,y)\\
&+\gamma_2(y)\int^\infty_0(f(x+z,y+z)-f(x,y)-f_x'(x,y)z-f_y'(x,y)z)\,\nu(\d z)\\
&+(\gamma_2(x)-\gamma_2(y))\int^\infty_0(f(x+z,y)-f(x,y)-f_x'(x,y)z)\,\nu(\d z).
\end{align*} The difference between $\tilde L$ and $L^*$ is that the coupling operator  $L^*$ do not involve the measures $\mu_{(x-y)_\kappa}$ and $\mu_{-(x-y)_\kappa}$. The coupling process associated with the coupling operator $L^*$ above can be construed directly.  Actually, putting \eqref{sdeB} and \eqref{sdeD0} together, we can
check by It\^{o}'s formula that the generator of the Markov process
$(X_t, Y_t)_{t\ge0}$ defined by \eqref{sdeB} and \eqref{sdeD0} on $\R_+^2$ is just the coupling operator
$L^*$; moreover, $X_t=Y_t$ for $t\ge{T}$, where
$T=\inf\{t>0:X_t=Y_t\}.$ Similarly, we can see that this coupling process $(X_t,Y_t)_{t\ge0}$ also enjoys the preserving order property as in Corollary \ref{L:order}.
\end{remark}

\section{Exponential convergence in the $L^1$-Wasserstein distance and the total variation distance}\label{Section3}

In this section, we shall give general results about the exponential
ergodicity of the process $(X_t)_{t\ge0}$ determined by the SDE
\eqref{sdeB}, in terms of both the $L^1$-Wasserstein distance and the total variation norm.
To present our main result, we first introduce some notation. For a strictly increasing  function $\psi$ on $\R_+$ and two
probability measures $\mu_1$ and $\mu_2$ on $\R_+$, define $$
W_{\psi}(\mu_1,\mu_2)=\inf_{\Pi\in\mathscr{C}(\mu_1,
\mu_2)}\int_{\R_+^2} \psi(|x-y|)\,\Pi(\d x, \d y), $$ where
$\mathscr{C}(\mu_1, \mu_2)$ is the collection of measures on
$\R_+^2$ with marginals $\mu_1$ and $\mu_2$. When $\psi$ is concave,
the above definition gives rise to a Wasserstein distance $W_\psi$
in the space of probability measures $\mu$ on
$\R_+$ such that $\int_{\R_+}\psi(z)\,\mu(\d z)<\infty$. If
$\psi(r)=r$ for all $r\geq0$, then $W_\psi$ is the standard
$L^1$-Wasserstein distance, which will be denoted by $W_1$
throughout this paper. Another well known example for $W_\psi$ is
given by $\psi(r)=\I_{(0,\infty)}(r)$, which leads to the total
variation distance $$W_\psi(\mu_1,
\mu_2)=\frac{1}{2}\|\mu_1-\mu_2\|_{\rm Var}:=\frac{1}{2}
[(\mu_1-\mu_2)^+(\R_+)+(\mu_1-\mu_2)^-(\R_+)].$$

\ \

The following two results give us the exponential convergence in the $L^1$-Wasserstein distance and the total variation norm for the SDE (\ref{sdeB}), respectively.

\begin{theorem}\label{Th:W}
 Suppose that  there are constants $l_0\ge 0$, $k_2>0$ and a nonnegative function $\Phi_1\in C[0, 2l_0]\cap C^3(0, 2l_0]$ satisfying $\Phi_1(0)=0$, $\Phi'_1\ge0$, $\Phi_1''\le0$ and $\Phi_1'''\ge0$ on $(0,2l_0]$ such that
\begin{equation}\label{e:drift}\gamma_0(x)-\gamma_0(y)\le \begin{cases}\Phi_1(x-y),\quad &0\le x-y\le l_0,\\
-k_2(x-y),\quad & x-y>l_0.
\end{cases}\end{equation}
If one of the following two assumptions holds:
 \begin{itemize}
\item[{\rm(A1)}] there exist constants $\beta\in[1,2)$ and $k_3>0$ such that
$$
\int_{0}^1\Phi_1(r)r^{-\beta}\,\d r<\infty
$$ and \begin{equation}\label{e:diff} {\gamma_1(x)}+ {\gamma_1(y)} \ge k_3(x-y)^{\beta},\quad 0\le x-y\le l_0;
\end{equation}
\item[{\rm(A2)}] there exist constants $\alpha\in (0,2)$,  $\beta\in[\alpha-1,\alpha)\cap(0,\infty)$ and $C_*, k_3>0$  such that
$$
\int_{0}^1\Phi_1(r)r^{\alpha-\beta-2}\,\d r<\infty,
$$ \begin{equation}\label{e:jump01}\int_0^r z^2\,\nu(\d z)\ge C_* r^{2-\alpha},\quad 0<r\le 1\end{equation}
 and
\begin{equation}\label{e:jump}(\gamma_2(x)-\gamma_2(y)) +\gamma_2(y)\I_{\{\inf_{0<z\le \kappa}[z^{\alpha}
\mu_z(\R_+)]\ge C_*\}} \ge k_3(x-y)^{\beta},\quad 0\le x-y\le l_0,
\end{equation} where $\mu_z$ is given by \eqref{e:measure0};
 \end{itemize}
then there exist positive constants $C$ and $\lambda$ so that for all $t>0$ and $x,y\ge0$,
$$W_1(P_t(x,\cdot),P_t(y,\cdot))\le C e^{-\lambda t} |x-y|.$$
\end{theorem}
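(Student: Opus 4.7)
The plan is to carry out a coupling argument using the Markovian coupling $(X_t,Y_t)_{t\ge 0}$ constructed in Subsection 2.2. Without loss of generality, assume $x>y\ge 0$, so by Corollary \ref{L:order} the difference process $U_t=X_t-Y_t$ stays nonnegative. I will build a concave, strictly increasing test function $f\in C^2(\R_+)$ with $f(0)=0$ and constants $c_1,c_2>0$ such that $c_1 r\le f(r)\le c_2 r$ for all $r\ge 0$. The target estimate is a differential inequality of the form $\tilde L F(x,y)\le -\lambda F(x,y)$ for $F(x,y):=f(|x-y|)$, where $\tilde L$ is the coupling generator \eqref{generator}. Once this is established, a standard argument (applying It\^o's formula, localizing, and invoking Gronwall) yields $\Ee^{(x,y)}f(|U_t|)\le e^{-\lambda t}f(|x-y|)$, and the $W_1$ bound follows from the two-sided linear control of $f$.

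To produce the key inequality $\tilde L F\le -\lambda F$, I will evaluate $\tilde L F$ on $\{x>y\}$ using \eqref{generator} and split the contributions into three pieces. The drift part is bounded above by $(\gamma_0(x)-\gamma_0(y))f'(x-y)$; by \eqref{e:drift} this contributes at most $\Phi_1(x-y)f'(x-y)$ on $(0,l_0]$ and at most $-k_2(x-y)f'(x-y)$ on $(l_0,\infty)$. The coupling-by-reflection diffusion part yields $\tfrac12(\sqrt{\gamma_1(x)}+\sqrt{\gamma_1(y)})^2 f''(x-y)$, which is nonpositive thanks to the concavity of $f$ and, under (A1), is bounded above by $\tfrac12 k_3 (x-y)^{\beta}f''(x-y)$ for $0\le x-y\le l_0$ by \eqref{e:diff}. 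The refined basic coupling jump part produces, after using $f(r-(x-y)_\kappa)+f(r+(x-y)_\kappa)-2f(r)\le 0$ (concavity again) and the explicit form of $\mu_{\pm(x-y)_\kappa}$ together with the lower bound \eqref{e:jump01} and \eqref{e:jump}, a nonpositive term of order $-k_3(x-y)^{\beta}\cdot\text{(jump-gain functional of }f\text{)}$ on $(0,l_0]$, while on $(l_0,\infty)$ one simply keeps it nonpositive.

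Under assumption (A1), I will seek $f$ of the form $f(r)=\int_0^r\phi(s)\,ds$ with $\phi$ a nonincreasing positive function chosen so that on $(0,l_0]$ the ODE inequality
\[
\Phi_1(r)\phi(r)+\tfrac12 k_3 r^{\beta}\phi'(r)\le -c\, \phi(r)\, r
\]
holds for some $c>0$. Dividing by $r^\beta$ and integrating exhibits $\phi$ as the exponential of $-C\int_0^r\Phi_1(s)s^{-\beta}\,ds$ times an elementary factor, which is well defined and bounded away from $0$ precisely because of the integrability hypothesis $\int_0^1\Phi_1(r)r^{-\beta}\,dr<\infty$. Beyond $l_0$ I extend $\phi$ by a suitable decreasing affine/exponential piece so that the dissipativity $-k_2(x-y)f'(x-y)$ absorbs $f$ up to a constant. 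Under (A2) the construction is parallel but the diffusion contribution is replaced by the jump contribution: the refined basic coupling gives, via \eqref{e:jump01} and the lower bound on $\mu_z(\R_+)$ appearing in \eqref{e:jump}, a negative term of order $r^{\beta}\int_0^{r}[f(r-(r\wedge\kappa))+f(r+(r\wedge\kappa))-2f(r)]z^{-1-\alpha}\,dz$, and the ODE for $\phi$ involves the power $r^{\alpha-\beta-2}$ rather than $r^{-\beta}$, which explains the corresponding integrability requirement on $\Phi_1$.

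The main obstacle is the construction and verification of $f$ in the regime $x-y\le l_0$: one must simultaneously (i) compensate the non-Lipschitz drift $\Phi_1$ via concavity, (ii) extract a usable negative upper bound from the noise (which is genuinely different in the diffusion and jump cases, in particular requiring the fractional comparison for $\mu_z$ in (A2)), and (iii) match this with the large-distance behaviour so as to produce a single contractive exponent $\lambda$ valid for all $r\ge 0$. The smoothness hypotheses $\Phi_1'\ge 0$, $\Phi_1''\le 0$, $\Phi_1'''\ge 0$ are exactly what is needed to pass derivatives through $\Phi_1$ when evaluating higher-order terms in the jump integral in (A2); the cases $\beta\in[1,2)$ and $\beta\in[\alpha-1,\alpha)\cap(0,\infty)$ then separate depending on whether the second-order diffusion gain or the $\alpha$-stable-type jump gain is the one driving the contraction.
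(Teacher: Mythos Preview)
Your proposal is correct and follows essentially the same route as the paper: the same Markovian coupling, the same concave test function $f(r)=\int_0^r\phi(s)\,ds$ with $\phi(s)\sim e^{-c\,g(s)}$ where $g$ absorbs the $\Phi_1$ integral (the paper writes this as $\psi(r)=c_1 r+\int_0^r e^{-c_2 g(s)}\,ds$ with $g(r)=r^{2-\beta}+c_3\int_0^r\Phi_1(s)s^{-\beta}\,ds$ in case (A1) and $g(r)=r^{\alpha-\beta}+c_3\int_0^r\Phi_1(s)s^{\alpha-\beta-2}\,ds$ in case (A2)), and the same large-distance extension driven by the dissipativity constant $k_2$. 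One point to tighten in your (A2) sketch: the displayed jump expression is garbled (the integrand does not depend on $z$), and the paper in fact treats two sub-cases separately---when $\inf_{0<z\le\kappa}z^\alpha\mu_z(\R_+)\ge C_*$ one uses the refined-basic-coupling term $\tfrac12\gamma_2(y)[\psi(r+r_\kappa)+\psi(r-r_\kappa)-2\psi(r)]\mu_{r_\kappa}(\R_+)\le \tfrac{C_*}{2}\gamma_2(y)\psi''(r)r_\kappa^{2-\alpha}$, whereas when only $\gamma_2(x)-\gamma_2(y)\ge k_3(x-y)^\beta$ holds one instead expands $(\gamma_2(x)-\gamma_2(y))\int_0^{c_0 r}(\psi(r+z)-\psi(r)-\psi'(r)z)\,\nu(dz)$ to third order and needs the control $\sup_{0<r\le 2l_0}\bigl(rg'(r)-rg''(r)/g'(r)\bigr)<\infty$ (this is where the sign hypotheses on $\Phi_1',\Phi_1'',\Phi_1'''$ enter) to choose $c_0$ so that the $\psi'''$ remainder does not spoil the sign.
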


\begin{theorem}\label{Th:W1} Under the assumptions of Theorem $\ref{Th:W}$, if additionally the function $\Phi_1$ in \eqref{e:drift} satisfies $$\limsup_{r\to0} \Phi_1(r)r^{1-\beta}=0$$
 when Assumption {\rm (A1)} holds, or satisfies $$\limsup_{r\to0} \Phi_1(r)r^{\alpha-\beta-1}=0$$
 when Assumption {\rm (A2)} holds, then there exist positive constants $C$ and $\lambda$ so that for all $t>0$ and $x,y\ge0$,
$$\|P_t(x,\cdot)-P_t(y,\cdot)\|_{\rm Var}\le C e^{-\lambda t}(1+ |x-y|).$$ \end{theorem}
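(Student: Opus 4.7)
Recall from Section 2.2 that the Markovian coupling $(X_t, Y_t)_{t\ge 0}$ is coalescent: $X_t = Y_t$ for every $t \ge T$, with $T = \inf\{t > 0 : X_t = Y_t\}$. This yields the standard bound
$$\|P_t(x,\cdot) - P_t(y,\cdot)\|_{\rm Var} \le 2\Pp^{(x,y)}(T > t),$$
so it is enough to bound $\Pp^{(x,y)}(T > t)$ by $Ce^{-\lambda t}(1 + |x-y|)$. The strategy is to produce a test function $\Psi \in C^2((0,\infty))$, bounded, with $\Psi(r) \ge c_0 > 0$ on $(0,\infty)$ and $\Psi(r) \le C_1(1 + r)$ on $\R_+$, satisfying
$$\tilde L \Psi(|x-y|) \le -\lambda \Psi(|x-y|) + c_2 |x-y|, \qquad x \ne y,$$
where $\tilde L$ is the coupling operator \eqref{generator}. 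Granted such a $\Psi$, It\^o's formula applied to $e^{\lambda t}\Psi(|X_t - Y_t|)$ on $[0, T)$, together with the $W_1$-estimate $\Ee^{(x,y)}|X_s - Y_s| \le Ce^{-\lambda_1 s}|x-y|$ from Theorem \ref{Th:W} to absorb the linear remainder, delivers
$$c_0\,\Pp^{(x,y)}(T > t) \le \Ee^{(x,y)} \Psi(|X_{t\wedge T} - Y_{t\wedge T}|) \le Ce^{-\lambda' t}(1 + |x-y|),$$
which is the claimed estimate.

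The construction of $\Psi$ is the heart of the matter. On $(0, l_0]$ one takes $\Psi(r) = c_0 + \int_0^r h(s)\,\d s$ for a carefully chosen $h$ that is singular at $0^+$, so that the noise-driven negative term in $\tilde L \Psi(r)$ dominates the drift contribution $\Phi_1(r)\Psi'(r)$. Under (A1), the dominating term is the reflection-coupling diffusion $-\tfrac12(\sqrt{\gamma_1(x)} + \sqrt{\gamma_1(y)})^2\Psi''(r)$, which by \eqref{e:diff} is at least of order $r^\beta|\Psi''(r)|$. Under (A2), the dominating term comes from the refined basic coupling of the jump part: using the identity $\mu_r = \delta_r * \mu_{-r}$ together with the lower bounds \eqref{e:jump01}--\eqref{e:jump}, the contribution is of order $r^{\alpha-\beta}\Psi(r)$. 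In either case, the opposing drift term $\Phi_1(r)\Psi'(r)$ is controlled precisely by the extra hypothesis $\limsup_{r \to 0}\Phi_1(r) r^{1-\beta} = 0$ (resp.\ $\limsup_{r \to 0}\Phi_1(r) r^{\alpha-\beta-1} = 0$), which forces the drift to vanish faster than the noise-induced pull toward the diagonal. On $[l_0, \infty)$, $\Psi$ is extended to a bounded $C^2$ function for which the large-scale dissipation $\gamma_0(x) - \gamma_0(y) \le -k_2(x-y)$ from \eqref{e:drift} yields the required exponential decay modulo the linear remainder $c_2|x-y|$.

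\textbf{Main obstacle.} The delicate point is the tuning of the singularity of $h$ at $0^+$: $h$ must be integrable so that $\Psi$ is bounded, yet singular enough that $\Psi'(0^+) = +\infty$ — otherwise the noise exerts no real damping on $\Psi$ and the lower bound $\Psi \ge c_0$ cannot coexist with the Lyapunov inequality. The inequality $\tilde L \Psi \le -\lambda\Psi + c_2|x-y|$ must moreover persist all the way to the boundary $\{x=y\}$. In the jump case, this requires careful bookkeeping of the compensation terms in the refined basic coupling (in particular, the integrals against $\mu_{\pm(x-y)_\kappa}$), which is exactly why the stronger vanishing of $\Phi_1$ at $0$ is needed in addition to \eqref{e:jump01}; the diffusion case is similar but simpler, reducing to an ordinary differential inequality for $\Psi$ near the origin.
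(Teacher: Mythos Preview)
Your plan is workable and shares the crucial local ingredient with the paper---a test function whose derivative blows up at $0^+$ so that the noise-driven second-order term overwhelms the drift $\Phi_1$ near the diagonal---but the global organisation is genuinely different. The paper does \emph{not} split off a linear remainder and invoke Theorem~\ref{Th:W} as a black box. Instead it builds a single sequence $f_n\in C^2(\R_+)$ with $f_n(r)=\psi(r)$ for $r\le 1/(n{+}1)$ and $f_n(r)=1+b\bigl(\tfrac{r}{1+r}\bigr)^{\theta}+\psi(r)$ for $r\ge 1/n$, where $\theta=(\alpha-\beta)/2$ under (A2) and $\psi$ is the very function \eqref{lem-test-funct.1} from the proof of Theorem~\ref{Th:W}; it then checks $\tilde L f_n\le -\lambda f_n$ on $\{r>1/n\}$ and appeals to Lemma~\ref{lem-trans}. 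Since $\liminf_n f_n(r)\asymp \I_{(0,\infty)}(r)(1+r)$, the total-variation bound drops out directly. Thus the paper folds your ``linear remainder'' into the test function itself via the $\psi$-summand, while the constant $1$ and the concave piece $b(r/(1{+}r))^\theta$ (which plays exactly the role of your $c_0+\int_0^r h$) generate the indicator. Your route is more modular---it uses Theorem~\ref{Th:W} only through its conclusion---but it forces you to juggle two rates ($\lambda<\lambda_1$) and to justify It\^o's formula for a function with $\Psi'(0^+)=\infty$, which in practice still demands a $C^2$ approximation very close to the paper's $f_n$.

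Two places to tighten. First, under (A2) the refined basic coupling together with \eqref{e:jump01}--\eqref{e:jump} produces a negative term of order $r^{2+\beta-\alpha}\Psi''(r)$, not ``$r^{\alpha-\beta}\Psi(r)$''; it is the singularity of $\Psi''$ at $0$ that yields the uniform negative constant needed to dominate $-\lambda\Psi(r)\approx -\lambda c_0$. Second, for $r>l_0$ your inequality $\tilde L\Psi\le -\lambda\Psi+c_2 r$ is in fact trivial once $\Psi$ is bounded and concave (the left side is $\le 0$, the right side grows linearly), so the large-scale dissipation in \eqref{e:drift} is not used at that step of your argument---it enters only through the $W_1$-bound. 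The paper, by keeping $\psi$ inside the test function, uses \eqref{e:drift} directly there.
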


We make some comments on the assumptions of Theorems \ref{Th:W} and \ref{Th:W1}. First, \eqref{e:drift} is the so-called dissipative condition for large distance on the drift term $\gamma_0(x)$. In applications there are a lot of choices for the function $\Phi_1$; for example, $\Phi_1(r)=Cr$ corresponds to the standard one-sided locally Lipschitz continuous condition, and  $\Phi_1(r)=C r\log(4l_0/r)$ is the typical one-sided non-Lipschitz continuous condition. Both functions satisfy assumptions in Theorems \ref{Th:W} and \ref{Th:W1}. Secondly, since we assume that the function $\gamma_1(x)$ is continuous on $\R_+$ such that $\gamma_1(0)=0$, \eqref{e:diff} is satisfied when the function $\gamma_1(x)$ is strictly positive on $(0,\infty)$ such that $\liminf_{x\to0} \frac{\gamma_1(x)}{x^\beta} >0$. Thirdly, \eqref{e:jump01} implies that $\int_0^1\,\nu(\d z)=\infty$. Suppose furthermore $\mu_x(\R_+)\ge C_* x^{-\alpha}$ for all $x\in (0,\kappa]$.  This assumption is concerned on  the concentration of the L\'evy measure $\nu$ around zero (small jump activity), and it implies that the measure $\nu$ has a component that is absolutely continuous with respect to the Lebesgue measure, see \cite[Proposition A.5]{LW18}. Then \eqref{e:jump} is equivalently saying that $\gamma_2(x)\ge k_3 x^\beta$ for all $0\le x\le l_0$, which is also equivalent that $\gamma_2(x)$ is strictly positive on $(0,\infty)$ such that $\liminf_{x\to0} \frac{\gamma_2(x)}{x^\beta} >0$. On the other hand, when $\gamma_2(x)-\gamma_2(y)\ge k_3(x-y)^\beta$ for all $0<x-y\le l_0$ (this in particular indicates that the function $\gamma_2$ is strictly increasing on $\R_+$), we only require \eqref{e:jump}, which can be fulfilled even for singular measures $\nu$, see the remarks below Theorem \ref{t1}.

\ \

As direct consequences of Theorems \ref{Th:W} and \ref{Th:W1}, we have the following
statement for the exponential ergodicity of the process
$(X_t)_{t\ge0}$ in term of the $W_1$-distance and the total variation norm. Let $\mcr{P}_1$ be the space of probability measures having the first finite moment.

\begin{corollary}\label{Cor:W}\begin{itemize}
\item[$(1)$] Under assumptions of Theorem $\ref{Th:W}$,  there exist a unique
invariant probability measure $\mu\in \mcr{P}_1$ and a constant $\lambda>0$ such that for all $t>0$ and
$\mu_0\in\mcr{P}_1$,
$$W_1(\mu_0 P_t,\mu)\le C_{\mu_0} e^{-\lambda t},$$ where $C_{\mu_0}$ is a positive constant depending on $\mu_0$.

\item[$(2)$] Under assumptions of Theorem $\ref{Th:W1}$,  there exist a unique
invariant probability measure $\mu\in \mcr{P}_1$ and a constant $\lambda>0$ such that for all $t>0$ and
$\mu_0\in\mcr{P}_1$,  $$\|\mu_0 P_t-\mu\|_{\rm Var}\le C_{\mu_0}e^{-\lambda t},$$ where $C_{\mu_0}$ is a positive constant depending on $\mu_0$. \end{itemize}  \end{corollary}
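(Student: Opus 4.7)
The plan is to deduce both statements from the contractive estimates of Theorems \ref{Th:W} and \ref{Th:W1} via Banach's fixed point theorem on the complete metric space $(\mcr{P}_1,W_1)$. The only ingredient not immediately supplied by those theorems is a first moment bound ensuring that $(P_t)_{t\ge 0}$ preserves $\mcr{P}_1$ and that the invariant measure we construct lies in $\mcr{P}_1$. Setting $y=0$ in \eqref{e:drift} produces constants $c_1\ge 0$ and $c_2>0$ with $\gamma_0(x)\le c_1-c_2 x$ for all $x\ge 0$. Applying the generator \eqref{opeB} to $f(x)=x$, with the usual localization by $\tau_N=\inf\{t:X_t\ge N\}$ to handle unboundedness and Fatou's lemma to pass to the limit, Gr\"onwall's inequality then delivers the linear moment bound
\[
\Ee^x[X_t]\le x e^{-c_2 t}+c_1/c_2,\qquad t\ge 0,\ x\ge 0,
\]
which in particular yields $\mu_0 P_t\in\mcr{P}_1$ whenever $\mu_0\in\mcr{P}_1$.

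Next, integrating the pointwise estimate of Theorem \ref{Th:W} against an optimal coupling of $\mu_0,\nu_0\in\mcr{P}_1$ gives the semigroup contraction $W_1(\mu_0 P_t,\nu_0 P_t)\le Ce^{-\lambda t}W_1(\mu_0,\nu_0)$. Choosing $T$ with $Ce^{-\lambda T}<1$ makes $P_T$ a strict contraction on the complete metric space $(\mcr{P}_1,W_1)$, so Banach's theorem produces a unique fixed point $\mu\in\mcr{P}_1$. The identity $(\mu P_s)P_T=\mu P_T P_s=\mu P_s$ together with uniqueness of this fixed point forces $\mu P_s=\mu$ for every $s\ge 0$, so $\mu$ is invariant for the whole semigroup; any other invariant probability in $\mcr{P}_1$ is also a fixed point of $P_T$ and must coincide with $\mu$. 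Part (1) then follows from contraction with $\nu_0=\mu$:
\[
W_1(\mu_0 P_t,\mu)=W_1(\mu_0 P_t,\mu P_t)\le Ce^{-\lambda t}W_1(\mu_0,\mu)=:C_{\mu_0}e^{-\lambda t}.
\]

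For part (2), the hypotheses of Theorem \ref{Th:W1} are strictly stronger than those of Theorem \ref{Th:W}, so the same invariant measure $\mu\in\mcr{P}_1$ is at our disposal. Integrating the pointwise total variation bound from Theorem \ref{Th:W1} against the product $\mu_0(\d x)\otimes \mu(\d y)$ and using invariance of $\mu$ yields
\[
\|\mu_0 P_t-\mu\|_{\rm Var}\le \int_{\R_+^2}\|P_t(x,\cdot)-P_t(y,\cdot)\|_{\rm Var}\,\mu_0(\d x)\,\mu(\d y)\le Ce^{-\lambda t}\Bigl(1+\int x\,\mu_0(\d x)+\int y\,\mu(\d y)\Bigr),
\]
which is finite and of the claimed form. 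The only step that is more than a formality is the linear moment estimate in the first paragraph, since it requires applying the generator \eqref{opeB} to the unbounded test function $x$; once the standard localization is carried out, the remainder is a direct application of the contraction principle and the Banach fixed point theorem.
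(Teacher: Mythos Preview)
Your proof is correct and follows essentially the same route as the paper's: establish a first moment bound so that $P_t$ preserves $\mcr{P}_1$, then run the contraction/Banach fixed point argument on $(\mcr{P}_1,W_1)$. The paper is terser, citing \cite[Proposition~2.3]{FL10} for the moment bound (obtaining only $\Ee^x X_t\le C_1(1+x)e^{Kt}$) and \cite[Corollary~1.8]{LW16} for the fixed point step, whereas you spell both out; your moment estimate $\Ee^x[X_t]\le xe^{-c_2 t}+c_1/c_2$ is in fact sharper than the one the paper invokes, since it is uniform in $t$.
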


\begin{remark} \label{remark3.3}
(1)  Recently, under the uniformly dissipative condition on the drift
term $\gamma_0(x)$, i.e., \eqref{e:drift} holds with $l_0=0$, which is equivalently saying that
$$\gamma_0(x)-\gamma_0(x)\le -k_2(x-y),\quad 0\le y\le x,$$ and the finite second moment condition for the jump
measure $\nu$, as well as some growth conditions on the coefficients $\gamma_1(x)$ and $\gamma_2(x)$, \cite[Theorem 4.2]{FJKR} establishes the
exponential ergodicity in the $L^1$-Wasserstein distance for
continuous-state nonlinear branching processes. Clearly, Theorem
\ref{Th:W} and Corollary \ref{Cor:W}(1) improve \cite[Theorem
4.2]{FJKR}. Here the drift term is only required to be dissipative for large
distances as indicated by \eqref{e:drift}, or the jump measure with finite first moment. In
particular, Theorem \ref{Th:W} and Corollary \ref{Cor:W}(1) are workable for
$$\nu(\d z)=\left(|z|^{-1-\alpha}\I_{\{0<z\le 1\}} + |z|^{-1-\alpha_1}\I_{\{z>
1\}}\right) \,\d z$$ with $\alpha\in (0,2)$ and $\alpha_1>1$.

(2) We mention that, by the remarks below \cite[Example 2.18]{LYZ}, one can easily give examples such that the assumptions of Theorem \ref{Th:W} (or Theorem \ref{Th:W1})  are satisfied, but for any $x>0$, $\Pp^x(\tau_0<\infty)>0$ (or even =1), where $\tau_0=\inf\{t>0:X_t=0\}$. Therefore, under assumptions of Theorem $\ref{Th:W}$ (or Theorem \ref{Th:W1})  the invariant probability measure of the process $(X_t)_{t\ge0}$ could be allowed to have an atom at $\{0\}.$
 \end{remark}

The following assertion is furthermore concerned on the strong ergodicity of the process $(X_t)_{t\ge0}$.
\begin{theorem}\label{Cor:var1} Under assumptions of Theorem $\ref{Th:W1}$, if \eqref{e:drift} is strengthened into the condition that there are a constant  $l_0\ge0$ and two nonnegative functions $\Phi_1$ and $\Phi_2$ such that
\begin{equation}\label{e:drift1}\gamma_0(x)-\gamma_0(y)\le \begin{cases}\Phi_1(x-y),\quad &0\le x-y\le l_0,\\
-\Phi_2(x-y),\quad & x-y>l_0,
\end{cases}\end{equation} where $\Phi_1$ is the same  as that in Theorem $\ref{Th:W1}$, and  $\Phi_2\in C^2[l_0,\infty)$ satisfies $\Phi'_2\ge0$ and $\Phi_2''\ge0$ on $[l_0,\infty)$, as well as $$\int_{l_0}^\infty \frac{1}{\Phi_2(s)}\,\d s<\infty,$$ then the process $(X_t)_{t\ge0}$ is strongly ergodic, i.e., there exist the unique
invariant probability measure $\mu$ and constants $C,\lambda>0$  such that for all $t>0$ and
$x\ge 0$,
$$\|\delta_x P_t-\mu\|_{\rm Var}\le Ce^{-\lambda t}.$$   \end{theorem}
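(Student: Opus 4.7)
The plan is to verify the Foster--Lyapunov criterion for strong ergodicity, namely that there exists a compact set $K\subset \R_+$ with $\sup_{x\ge 0}\Ee^x[\tau_K]<\infty$. This, combined with the exponential convergence in total variation from bounded sets supplied by Theorem \ref{Th:W1}, yields strong ergodicity at an exponential rate through \cite[Lemma 2.1]{Mao02}.

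First, I would introduce the Lyapunov function
\[V(x):=\int_{l_0}^{x\vee l_0}\frac{1}{\Phi_2(s)}\,\d s,\qquad x\ge 0.\]
The integrability hypothesis $\int_{l_0}^{\infty}\Phi_2(s)^{-1}\,\d s<\infty$ ensures that $V$ is bounded on $\R_+$, with $V_\infty:=\lim_{x\to\infty}V(x)<\infty$. On $[l_0,\infty)$ the function $V$ is $C^2$, strictly increasing, and concave, since $V'(x)=1/\Phi_2(x)>0$ and $V''(x)=-\Phi_2'(x)/\Phi_2(x)^2\le 0$ by the monotonicity of $\Phi_2$. One may smooth $V$ near $l_0$ to ensure $V\in C^2(\R_+)$ (this does not affect the large-$x$ behavior, which is all that matters), or alternatively work with $V$ only on $[l_0,\infty)$ via the stopping time at which $X$ first enters $[0,l_0]$.

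Next, I would estimate $LV(x)$ for $x>l_0$, where $L$ is the generator given by \eqref{opeB}. Setting $y=0$ in \eqref{e:drift1} gives $\gamma_0(x)\le \gamma_0(0)-\Phi_2(x)$ whenever $x>l_0$, hence
\[\gamma_0(x)V'(x)\le -1+\frac{\gamma_0(0)}{\Phi_2(x)}.\]
Since $\Phi_2$ is non-decreasing and $\int_{l_0}^{\infty}\Phi_2^{-1}<\infty$ forces $\Phi_2(x)\to\infty$ as $x\to\infty$, the above is at most $-3/4$ for $x$ sufficiently large. The diffusion term $\gamma_1(x)V''(x)/2$ is non-positive by concavity of $V$, and the jump term $\gamma_2(x)\int_0^\infty\bigl(V(x+z)-V(x)-zV'(x)\bigr)\,\nu(\d z)$ is likewise non-positive, with integrability ensured by $V$ being bounded, $V'$ being bounded on $[l_0,\infty)$, and the standing moment condition $\int(z\wedge z^2)\,\nu(\d z)<\infty$. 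Therefore there exists $x_0>l_0$ such that $LV(x)\le -1/2$ for all $x\ge x_0$.

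Finally, setting $K:=[0,x_0]$ and $\tau_K:=\inf\{t\ge 0:X_t\in K\}$, Dynkin's formula yields
\[\tfrac{1}{2}\Ee^x[t\wedge\tau_K]\le V(x)-\Ee^x V(X_{t\wedge\tau_K})\le V_\infty,\qquad x\ge x_0,\]
and letting $t\to\infty$ via monotone convergence gives $\sup_{x\ge 0}\Ee^x[\tau_K]\le 2V_\infty<\infty$. Combined with \cite[Lemma 2.1]{Mao02} and the exponential convergence in total variation from Theorem \ref{Th:W1} (applied with one of the marginals started at a point of $K$), this establishes the strong ergodicity. The main obstacle is controlling the jump term in $LV$: concavity of $V$ gives the correct sign, but quantifying the second-order Taylor remainder $V(x+z)-V(x)-zV'(x)$ uniformly in $x\ge l_0$ (so that the integral against $\nu$ stays finite) reduces to suitable bounds on $\Phi_2'/\Phi_2^2$, a property which should hold generically but may require some care in edge cases depending on the precise growth of $\Phi_2$.
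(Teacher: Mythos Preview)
Your approach is correct but takes a genuinely different route from the paper. The paper stays entirely within its coupling framework: it modifies the test function $\psi$ from the proof of Theorem~\ref{Th:W1} only on $(2l_0,\infty)$, replacing the tail by a bounded expression built from $\int 1/\Phi_2$ (see \eqref{lem-test-funct.222}), so that $\psi$ itself is bounded. The same $f_n$ construction then yields $\tilde L f_n(x-y)\le -\lambda$ for a fixed constant $\lambda>0$ and \emph{all} $x>y$, and Lemma~\ref{lem-trans} delivers the uniform bound $\|P_t(x,\cdot)-P_t(y,\cdot)\|_{\rm Var}\le Ce^{-\lambda t}$ in one step. You instead work with a one-point Lyapunov function $V(x)$ on the state space rather than a two-point coupling function on $x-y$, deduce uniform hitting-time bounds, and appeal to external machinery (Mao, or equivalently the Meyn--Tweedie criterion for uniform ergodicity via a bounded drift function and a petite set). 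Both routes exploit the same integral $\int_{l_0}^\infty 1/\Phi_2<\infty$ to manufacture boundedness; the paper's version is more self-contained (it reuses the coupling apparatus already in place with only a change of tail in $\psi$), while yours is the classical Foster--Lyapunov picture but requires checking that $K=[0,x_0]$ is petite---which you supply via Theorem~\ref{Th:W1}---and the precise sufficiency statement you invoke should be cited with care, since the paper itself uses \cite[Lemma~2.1]{Mao02} only in the necessary direction.

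Your stated obstacle is not actually one. Concavity of $V$ on $[l_0,\infty)$ gives $V(x+z)-V(x)-zV'(x)\le 0$ for every $z>0$, so only finiteness of the integral (at each fixed $x$) is needed, not any uniform-in-$x$ bound. For fixed $x\ge l_0$ the integrand is $O(z^2)$ as $z\downarrow 0$ because $V''(x)$ is finite, and for $z\ge 1$ it is dominated by $V_\infty + z/\Phi_2(l_0)$; both pieces are integrable against $\nu$ by the standing moment hypothesis $\int(z\wedge z^2)\,\nu(\d z)<\infty$. No control on $\Phi_2'/\Phi_2^2$ beyond pointwise finiteness is required.
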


Note that since $\Phi_2''\ge0$ on $[2l_0,\infty)$, $\Phi_2(r)\ge \Phi_2(2l_0)+\Phi_2'(2l_0) r.$ So, \eqref{e:drift1} is stronger than \eqref{e:drift} (by choosing $l_0>0$ large enough if necessarily).
A typical example for the function $\Phi_2$ in Theorem \ref{Cor:var1} is that $\Phi_2(r)=c_0r^\delta$ with $c_0>0$ and $\delta>1$.

\ \

We close this section with the following examples on the coefficient $\gamma_0(x)$.

\begin{example}\label{Exm}\begin{itemize}
\item [(1)] Let $\gamma_0(x)=b_1x\log(1+1/x)-b_2x$ with  $b_1,b_2>0$. Then, \eqref{e:drift} holds with $\Phi_1(r)=b_1r\log (1+1/r)$ and $k_2=b_2/2$ for some $l_0>0$ large enough.

\item [(2)] Let $\gamma_0(x)=b_1x-b_2x^\delta$ with $\delta>1$ and $b_1,b_2>0$. Then, \eqref{e:drift1} holds with $\Phi_1(r)=b_1r$ and $\Phi_2(r)=b_2r^\delta/2$ for some $l_0>0$.

\item [(3)] Let  $\gamma_0(x)=b_1x-b_2e^{cx^\delta}$ with $c, \delta,b_1,b_2>0$. Then, \eqref{e:drift1} holds with $\Phi_1(r)=b_1r$, and $\Phi_2(r)=cr^\theta$ with any $\theta>1$ and some $l_0,c>0$.
\end{itemize}
\end{example}

\section{Proofs}

\subsection{Lemmas}

To prove the main results in this paper, we need the following elementary lemmas.

\begin{lemma}\label{lem-test-funct}
For fixed $l_0>0$, let $g\in C[0,2l_0]\cap C^3(0,2l_0]$ be satisfying $g(0)=0$ and
  \begin{equation}\label{lem-test-funct.0}
  g'(r)\geq 0,\ g''(r)\leq 0 \mbox{ and } g'''(r)\geq 0\quad \mbox{for any } r\in (0,2l_0].
  \end{equation}
Then for all $c_1, c_2>0$ the function
  \begin{equation}\label{lem-test-funct.1}
\psi(r)= \begin{cases}
  c_1 r+ \int_0^r e^{-c_2 g(s)}\, \d s ,& r\in [0, 2l_0],\\
 \psi(2l_0)+ \frac{\psi'(2l_0)}{2}\int_0^{r-2l_0}\Big[1+\exp\big(\frac{2\psi''(2l_0)}{\psi'(2l_0)}s\big)\Big]\,\d s, & r\in(2l_0,\infty)
  \end{cases}
  \end{equation}
satisfies
\begin{itemize}
\item[\rm (1)] $\psi\in C^2(\R_+)$ such that $\psi'> 0$ and $\psi''< 0$ on $\R_+$;
\item[\rm (2)]  $\psi'''\geq 0$ and $\psi^{(4)}\leq 0$ on $(0,2l_0]$. In particular, for any $0\le\delta\le r\le l_0$,
$$
\psi(r+\delta)+\psi(r-\delta)-2\psi(r)\le \psi''(r)\delta^2;
$$
\item[\rm (3)] for all $r>0$, $$\min\Big\{c_1, \frac{\psi(2l_0)}{4l_0}, \frac{\psi'(2l_0)}{4}\Big\}r\le \psi(r)\le (1+c_1)r.$$
\end{itemize}
\end{lemma}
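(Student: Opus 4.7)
The plan is to establish the three claims by direct computation on each piece of $\psi$ and verifying $C^2$-compatibility at the join $r=2l_0$. For claim~(1), I would differentiate the formula on $[0,2l_0]$ to get $\psi'(r)=c_1+e^{-c_2 g(r)}>0$ and $\psi''(r)=-c_2 g'(r)e^{-c_2 g(r)}\le 0$ (noting $g\ge 0$, which follows from $g(0)=0$ and $g'\ge 0$), and then compute the first two derivatives of the piece on $(2l_0,\infty)$ directly from~\eqref{lem-test-funct.1}; the factor $2\psi''(2l_0)/\psi'(2l_0)$ in the exponent is precisely what makes $\psi''$ match at $r=2l_0$, so $\psi\in C^2(\R_+)$ and the signs carry over to the second piece since the exponent is nonpositive there.

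For claim~(2), I would compute on $(0,2l_0]$
\[
\psi'''(r)=c_2 e^{-c_2 g(r)}\bigl[-g''(r)+c_2 g'(r)^2\bigr]
\]
and
\[
\psi^{(4)}(r)=c_2 e^{-c_2 g(r)}\bigl[3c_2 g'(r)g''(r)-c_2^2 g'(r)^3-g'''(r)\bigr];
\]
the signs are immediate from $g'\ge 0$, $g''\le 0$, $g'''\ge 0$, each of the three bracketed terms in $\psi^{(4)}$ being nonpositive for its own reason. Given these signs, the quadratic-concavity inequality follows from the fourth-order Taylor expansion: for $0\le\delta\le r\le l_0$, both $r\pm\delta$ lie in $[0,2l_0]$ and
\[
\psi(r+\delta)+\psi(r-\delta)-2\psi(r)=\psi''(r)\delta^2+\tfrac{\delta^4}{24}\bigl[\psi^{(4)}(\xi_+)+\psi^{(4)}(\xi_-)\bigr],
\]
with remainder nonpositive for some $\xi_\pm\in(r-\delta,r+\delta)\subset(0,2l_0]$.

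For claim~(3), the upper bound follows from $e^{-c_2 g}\le 1$: on $[0,2l_0]$ this gives $\psi(r)\le(1+c_1)r$ and in particular $\psi'(2l_0)\le 1+c_1$; on $(2l_0,\infty)$, the same bound applied to the exponential inside $\psi'$ yields $\psi'(r)\le\psi'(2l_0)\le 1+c_1$, which integrates to $\psi(r)\le(1+c_1)r$. For the lower bound, set $M:=\min\{c_1,\psi(2l_0)/(4l_0),\psi'(2l_0)/4\}$: on $[0,2l_0]$ the estimate $\psi(r)\ge c_1 r\ge Mr$ is trivial, and on $(2l_0,\infty)$, dropping the nonnegative exponential in $\psi'$ yields $\psi'(r)\ge\psi'(2l_0)/2\ge 2M$, whence
\[
\psi(r)\ge \psi(2l_0)+\tfrac{\psi'(2l_0)}{2}(r-2l_0)\ge 4l_0 M+2M(r-2l_0)=2Mr\ge Mr,
\]
using $\psi(2l_0)\ge 4l_0 M$ in the middle step.

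No single step is genuinely difficult; the sign analysis for $\psi^{(4)}$ and the choice of $M$ in part~(3)---tailored so that both the endpoint value $\psi(2l_0)$ and the extension slope $\psi'(2l_0)/2$ simultaneously dominate $Mr$ past $2l_0$---are the only places requiring careful bookkeeping.
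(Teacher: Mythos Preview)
Your argument is correct and follows the same route as the paper's. The paper is terser: for assertion~(2) it simply cites \cite[Lemma~4.1]{LW18} rather than computing $\psi'''$ and $\psi^{(4)}$ explicitly as you do, and for the lower bound in~(3) it splits $(2l_0,\infty)$ into $(2l_0,4l_0]$ (where $\psi(r)\ge\psi(2l_0)\ge\tfrac{\psi(2l_0)}{4l_0}r$) and $[4l_0,\infty)$ (where $\psi(r)\ge\tfrac{\psi'(2l_0)}{2}(r-2l_0)\ge\tfrac{\psi'(2l_0)}{4}r$), whereas you treat all of $(2l_0,\infty)$ at once by combining both bounds---your version is a bit slicker. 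One small point to tidy in your part~(2): when $\delta=r$ the left endpoint $r-\delta=0$ lies outside $(0,2l_0]$, and since $g\in C^3$ only on $(0,2l_0]$ the derivative $\psi^{(4)}$ need not extend to $0$ (indeed $g'(0^+)$ may be infinite), so the Lagrange remainder is not directly available there; the fix is to apply your Taylor argument for $0<\delta<r$ and pass to $\delta=r$ by continuity of both sides.
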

\begin{proof} The assertion (1) follows from the definition of $\psi$. The assertion (2) has been proven in  \cite[Lemma 4.1]{LW18}. Since $\|\psi'\|_\infty= 1+c_1$ and $\psi(0)=0$, the second inequality in the assertion (3) holds. On the other hand, for any $r\in [0,2l_0]$, $\psi(r)\ge c_1r$; for any $r\in [4l_0,\infty)$, $\psi(r)\ge \frac{\psi'(2l_0)}{2}(r-2l_0)\ge  \frac{\psi'(2l_0)}{4}r$; for any $r\in (2l_0,4l_0]$,
$\psi(r)\ge \psi(2l_0)\ge \frac{\psi(2l_0)}{4l_0} r$. Combining with all the estimates above, we can prove the first inequality in the assertion (3).  \end{proof}

We have the following typical choice of functions $g$ in the definition \eqref{lem-test-funct.1} for $\psi$.

\begin{lemma}\label{Lem-derivatives}
 For fixed $l_0>0$, let $\Phi_1\in C[0, 2l_0]\cap C^3(0, 2l_0]$ be a nonnegative function such that $\Phi_1(0)=0$, $\Phi_1'\ge0$, $\Phi_1''\le0$ and $\Phi_1'''\ge0$ on $(0,2l_0]$. Suppose that for some $\theta\in (0,1]$, $$\int^r_0\Phi_1(z)z^{\theta-2}\,\d z<\infty,\quad r\in [0,2l_0].$$ For any $c_0>0$, set $$g(r):=r^\theta+c_0\int^r_0\Phi_1(z)z^{\theta-2}\,\d z.$$  Then $g\in C[0,2l_0]\cap C^3(0,2l_0]$ such that $g(0)=0$, and \eqref{lem-test-funct.0} holds;
moreover,
  $$\sup_{0<r\le 2l_0} \left(rg'(r)-\frac{rg''(r)}{g'(r)}\right)<\infty.$$
\end{lemma}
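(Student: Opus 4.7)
The plan is to verify items (i) the basic regularity $g\in C[0,2l_0]\cap C^3(0,2l_0]$ with $g(0)=0$, (ii) the sign conditions $g'\ge 0$, $g''\le 0$, $g'''\ge 0$, and (iii) the boundedness of $rg'(r) - rg''(r)/g'(r)$, by direct computation from
\[
g'(r)=\theta r^{\theta-1}+c_0\Phi_1(r)r^{\theta-2},\qquad g''(r)r^{3-\theta}=\theta(\theta-1)r+c_0r\Phi_1'(r)+c_0(\theta-2)\Phi_1(r).
\]
Regularity and $g(0)=0$ are immediate from the continuity of $\Phi_1$ and the assumed integrability of $\Phi_1(z)z^{\theta-2}$ near $0$. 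The sign of $g'$ is trivial. For $g''\le 0$ the key input is the concavity inequality $r\Phi_1'(r)\le\Phi_1(r)$ (which follows from $\Phi_1''\le 0$ and $\Phi_1(0)=0$): together with $\theta\le 1$, which forces $\theta-2\le -1$, one gets $c_0r\Phi_1'(r)+c_0(\theta-2)\Phi_1(r)\le c_0\Phi_1(r)-c_0\Phi_1(r)=0$.

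The main obstacle is $g'''\ge 0$. I would split $g=r^\theta+c_0h$ with $h(r)=\int_0^r\Phi_1(z)z^{\theta-2}\,\d z$; the $r^\theta$ piece contributes $\theta(\theta-1)(\theta-2)r^{\theta-3}\ge 0$, so it suffices to show $h'''(r)=[r^{\theta-2}\Phi_1(r)]''\ge 0$. Writing $\psi(r)=\Phi_1(r)/r$ (so $\Phi_1=r\psi$) and $\Psi(r)=r^{\theta-1}\psi(r)=r^{\theta-2}\Phi_1(r)$, I expand
\[
\Psi''(r)=(\theta-1)(\theta-2)r^{\theta-3}\psi(r)+2(\theta-1)r^{\theta-2}\psi'(r)+r^{\theta-1}\psi''(r).
\]
For $\theta\in(0,1]$ each summand is nonnegative provided one knows $\psi\ge 0$, $\psi'\le 0$ and $\psi''\ge 0$. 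The first two follow since $\Phi_1\ge 0$, $\Phi_1(0)=0$ and $\Phi_1$ is concave. For $\psi''\ge 0$ I use the algebraic identity $r^3\psi''(r)=r^2\Phi_1''(r)-2r\Phi_1'(r)+2\Phi_1(r)=:q(r)$ and observe that $q'(r)=r^2\Phi_1'''(r)\ge 0$, so $q$ is non-decreasing on $(0,2l_0]$. It remains to check $\lim_{r\to 0^+}q(r)=0$: continuity of $\Phi_1$ gives $\Phi_1(r)\to 0$; concavity gives $r\Phi_1'(r)\le\Phi_1(r)\to 0$; and since $-\Phi_1''$ is non-increasing one has $r(-\Phi_1''(r))\le 2(\Phi_1'(r/2)-\Phi_1'(r))\le 2\Phi_1'(r/2)\le 4\Phi_1(r/2)/r$, so $r^2\Phi_1''(r)\to 0$ too. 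Combining, $q\ge 0$ on $(0,2l_0]$, hence $\psi''\ge 0$ and finally $h'''\ge 0$.

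For the supremum in (iii), I handle $rg'(r)=\theta r^\theta+c_0 r^{\theta-1}\Phi_1(r)$ and $rg''(r)/g'(r)$ separately. The first term in $rg'$ is continuous and bounded on $[0,2l_0]$; for the second, the integrability hypothesis combined with monotonicity of $\Phi_1$ gives
\[
\frac{1}{2}\Phi_1(r/2)\,r^{\theta-1}\le\int_{r/2}^r\Phi_1(z)z^{\theta-2}\,\d z\xrightarrow{r\to 0}0\qquad(\theta<1),
\]
while for $\theta=1$ boundedness is trivial, so in either case $r^{\theta-1}\Phi_1(r)$ extends continuously to $0$ at $r=0$ and is bounded on $(0,2l_0]$. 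For the ratio I write
\[
-\frac{rg''(r)}{g'(r)}=-\frac{\theta(\theta-1)r+c_0(\theta-2)\Phi_1(r)+c_0r\Phi_1'(r)}{\theta r+c_0\Phi_1(r)}
\]
and estimate term by term: the denominator dominates $\theta r$ and $c_0\Phi_1(r)$ separately, so the first ratio is bounded by $|\theta-1|$, the second by $c_0(2-\theta)/c_0=2-\theta$, and the third, via $r\Phi_1'(r)\le\Phi_1(r)$, by $1$. Summing these uniform bounds yields the required finiteness of the supremum, completing the proof.
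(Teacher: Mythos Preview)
Your proof is correct and follows the same overall decomposition as the paper: both split $g=g_1+g_2$ with $g_1(r)=r^\theta$ and $g_2(r)=c_0\int_0^r\Phi_1(z)z^{\theta-2}\,\d z$, and both rest on the concavity estimate $r\Phi_1'(r)\le\Phi_1(r)$. There are two places where your route differs. For $g_2'''\ge 0$, the paper computes the bracket $(\theta-2)(\theta-3)\Phi_1(r)+2(\theta-2)r\Phi_1'(r)+r^2\Phi_1''(r)$ directly and bounds it below via Taylor's formula with Lagrange remainder (using $\Phi_1'''\ge 0$ to compare $\Phi_1''(\xi)$ with $\Phi_1''(r)$). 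Your detour through $\psi(r)=\Phi_1(r)/r$ and the monotonicity of $q(r)=r^3\psi''(r)$ is equivalent---in fact $q(r)=2\bigl[\Phi_1(r)-r\Phi_1'(r)+\tfrac{r^2}{2}\Phi_1''(r)\bigr]$ is precisely twice the Taylor polynomial remainder---but your version has the minor advantage of making the limiting behaviour at $r=0$ explicit, which matters since only $\Phi_1\in C[0,2l_0]\cap C^3(0,2l_0]$ is assumed. For $\sup_{0<r\le 2l_0} rg'(r)$, the paper is shorter: concavity of $g$ together with $g(0)=0$ gives $rg'(r)\le g(r)\le g(2l_0)$ in one line, whereas you invoke the integrability hypothesis to control $r^{\theta-1}\Phi_1(r)$ near $0$. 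The ratio $-rg''/g'$ is handled the same way in both proofs; the paper simply drops the nonpositive term $-c_0 r\Phi_1'(r)$ in the numerator and obtains the slightly sharper bound $2-\theta$, while your triangle-inequality summing gives a larger but still finite constant.
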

\begin{proof}
Let $g_1(r)=r^\theta$ and $g_2(r)=c_0\int^r_0\Phi_1(z)z^{\theta-2}\,\d z$ for some $\theta\in(0,1]$ and $c_0>0$. It is clear that $g_1(0)=0$, and $g_1$ satisfies \eqref{lem-test-funct.0}. We next claim that $g_2$ also enjoys the property \eqref{lem-test-funct.0}. Indeed, by assumptions, it is clear that $g_2(0)=0$, $g_2'(r)= c_0 \Phi_1(r)r^{\theta-2}\ge0$, and
\begin{align*}g_2''(r)=&c_0(\theta-2)\Phi_1(r)r^{\theta-3}+c_0\Phi_1'(r)r^{\theta-2}=c_0r^{\theta-3}\left((\theta-2)\Phi_1(r)+\Phi_1'(r) r\right)\\
\le&c_0r^{\theta-3}\left(-\Phi_1(r)+\Phi_1'(r) r\right)\le 0 \end{align*} for $r\in (0,2l_0]$,  where in the first inequality  we used the facts that $\theta\in(0,1]$ and $\Phi_1(r)\ge0$ for all $r\in (0,2l_0]$, and the last inequality follows from the facts that $\Phi_1(0)=0$ and $\Phi_1''(r)\le 0 $ for $r\in (0,2l_0]$. Furthermore, for $r\in (0,2l_0]$, \begin{align*}g_2'''(r)=&c_0\Phi_1''(r)r^{\theta-2}+2c_0(\theta-2)\Phi'_1(r)r^{\theta-3}+c_0(\theta-2)(\theta-3)\Phi_1(r)r^{\theta-4}\\
=&c_0r^{\theta-4}\left[(\theta-2)(\theta-3)\Phi_1(r)+2(\theta-2)\Phi'_1(r)r+\Phi_1''(r)r^2\right].\end{align*} By the facts that $\Phi_1(0)=0$ and $\Phi_1'''\ge0$ on $(0,2l_0]$, and the mean value theorem,
\begin{align*}0=&2(2-\theta)\Phi_1(0)\le 2(2-\theta)\Phi_1(r)-2(2-\theta)\Phi_1'(r)r+(2-\theta)\Phi_1''(r)r^2\\
\le&(3-\theta)(2-\theta)\Phi_1(r)-2(2-\theta)\Phi_1'(r)r+\Phi_1''(r) r^2\end{align*} for all $r\in (0,2l_0]$, where in the second inequality above we used the facts that $\theta\in(0,1]$, $\Phi_1\ge0$ and $\Phi_1''\le0$ on $(0,2l_0]$. This implies that $g_2'''\ge0$ on $(0,2l_0]$. Combining with all the estimates above, we prove the desired assertion for $g_2$. Since $g=g_1+g_2$, we show that $g$ satisfies \eqref{lem-test-funct.0}.

Since $g(0)=0$ and $g''(r)\le0$ for all $r\in (0,2l_0]$, $rg'(r)\le g(r)$ for all $r\in (0,2l_0]$ and so $$\sup_{r\in (0,2l_0]} rg'(r)\le \sup_{r\in (0,2l_0]}  g(r)= g(2l_0).$$ On the other hand,
$$g'(r)=\theta r^{\theta-1}+c_0\Phi_1(r)r^{\theta-2}$$ and
\begin{align*}-rg''(r)&=\theta(1-\theta)r^{\theta-1}+c_0(2-\theta)r^{\theta-2}\Phi_1(r)-c_0r^{\theta-1}\Phi'_1(r)\\
&\le\theta(1-\theta)r^{\theta-1}+c_0(2-\theta)r^{\theta-2}\Phi_1(r),\end{align*} where we used the fact that $\Phi_1'\ge0$ on $(0,2l_0]$. Thus, by $\theta\in(0,1]$,
$$\sup_{r\in (0,2l_0]}\frac{-rg''(r)}{g'(r)}\le \sup_{r\in (0,2l_0]}\frac{\theta(1-\theta)r^{\theta-1}+c_0(2-\theta)r^{\theta-2}\Phi_1(r)}{\theta r^{\theta-1}+c_0\Phi_1(r)r^{\theta-2}}\le 2-\theta.$$ Therefore, the proof is complete.
\end{proof}

In the following, for any $f\in C^2(\R_+)$, $\tilde L f(x-y):=\tilde L  F(x,y)$, where $F(x,y)=f(x-y)$.

\begin{lemma}\label{lem-trans}
For any $n\ge 1$, let $\psi_n \in C^2(\R_+)$ be  satisfying $\psi_n(0)=0$ and
$$
\tilde{L}\psi_n(x-y)\le -\lambda \psi_n(x-y),\quad 1/n\le x-y\le n,
$$ where $\lambda>0$ is independent of $n$, $x$ and $y$.
Then for any $t>0$ and $x, y\in \R_+$,
$$
W_{\psi} (P_t(x,\cdot), P_t(y,\cdot))\le \psi(|x-y|)e^{-\lambda t},
$$
where $\psi:=\liminf_{n\to\infty}\psi_n.$
\end{lemma}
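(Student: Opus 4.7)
Without loss of generality, I would fix $x\ge y$ and use the coupling process $(X_t,Y_t)_{t\ge0}$ from \eqref{sdeD} starting at $(x,y)$, whose infinitesimal generator is the coupling operator $\tilde L$. By Corollary \ref{L:order} the order is preserved so $|X_t-Y_t|=X_t-Y_t$ for all $t\ge0$; moreover, after the coupling time $T$ we have $X_t=Y_t$, so $\psi_n(X_t-Y_t)=\psi_n(0)=0$ for $t\ge T$. The strategy is to apply Dynkin's formula to $e^{\lambda s}\psi_n(X_s-Y_s)$ up to a suitable localization time, invoke the hypothesis $\tilde L\psi_n\le -\lambda\psi_n$, and then pass to the limit $n\to\infty$ using Fatou.

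More precisely, I would introduce the stopping time
\[
\tau_n=\inf\bigl\{t\ge0:\,X_t-Y_t\le 1/n\ \text{or}\ X_t-Y_t\ge n\ \text{or}\ X_t\vee Y_t\ge n\bigr\}\wedge T,
\]
so that $1/n\le X_s-Y_s\le n$ for $s\in[0,\tau_n)$ and the process stays in a bounded region where $\psi_n$ and $\tilde L\psi_n$ are controlled. Applying the Itô/Dynkin formula to $e^{\lambda s}\psi_n(X_s-Y_s)$, the martingale part vanishes in expectation after stopping at $\tau_n$, and the drift is
\[
e^{\lambda s}\bigl(\tilde L\psi_n+\lambda\psi_n\bigr)(X_s-Y_s)\le 0
\]
by hypothesis on $[0,\tau_n)$. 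This yields
\[
\Ee\bigl[e^{\lambda(t\wedge\tau_n)}\psi_n(X_{t\wedge\tau_n}-Y_{t\wedge\tau_n})\bigr]\le \psi_n(x-y),
\]
which after dropping the exponential factor at the stopped time (bounded below by $1$ on $\{t\wedge\tau_n\ge0\}$, and keeping in mind $\psi_n\ge0$) gives $\Ee\psi_n(X_{t\wedge\tau_n}-Y_{t\wedge\tau_n})\le \psi_n(x-y)e^{-\lambda t}$.

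Sending $n\to\infty$, note that $\tau_n\uparrow T$ (the coupling is non-explosive and $X_{\tau_n}-Y_{\tau_n}\to 0$ exactly when $\tau_n\uparrow T$), so $X_{t\wedge\tau_n}-Y_{t\wedge\tau_n}\to X_{t\wedge T}-Y_{t\wedge T}=X_t-Y_t$ a.s.\ by the coalescence property after $T$. Since $\psi=\liminf_{n\to\infty}\psi_n$ is nonnegative, Fatou's lemma gives
\[
\Ee\psi(X_t-Y_t)\le \liminf_{n\to\infty}\Ee\psi_n(X_{t\wedge\tau_n}-Y_{t\wedge\tau_n})\le \psi(x-y)\,e^{-\lambda t},
\]
where on the right we again use $\liminf\psi_n(x-y)=\psi(x-y)$. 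Finally, since the joint law of $(X_t,Y_t)$ lies in $\mathscr{C}(P_t(x,\cdot),P_t(y,\cdot))$, the definition of $W_\psi$ yields $W_\psi(P_t(x,\cdot),P_t(y,\cdot))\le\Ee\psi(|X_t-Y_t|)\le \psi(|x-y|)e^{-\lambda t}$.

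The main technical obstacle is the passage to the limit: one must ensure that $\tau_n\uparrow T$ (rather than stopping prematurely), which relies on non-explosion of both marginals $X$ and $Y$ together with the fact that the only way to exit $(1/n,n)$ on the relative coordinate without leaving the ambient box is to approach $0$, i.e., to coalesce. A further delicate point is that $\psi_n$ is not assumed to dominate $\psi$ pointwise, only $\liminf\psi_n=\psi$; one therefore cannot directly apply dominated convergence and must argue with Fatou on both sides, which forces the use of nonnegativity of $\psi_n$ and justifies why the conclusion is stated in terms of $\liminf$ rather than a pointwise limit.
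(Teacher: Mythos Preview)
Your overall strategy---use the order-preserving coupling from Corollary~\ref{L:order}, apply Dynkin's formula to $e^{\lambda s}\psi_n(X_s-Y_s)$ with localization, then pass to the limit via Fatou---is exactly the argument the paper has in mind (it simply defers to \cite[Theorem~3.1, part~(2)]{LW18}). However, there is a genuine slip in the step where you ``drop the exponential factor''. From
\[
\Ee\bigl[e^{\lambda(t\wedge\tau_n)}\psi_n(X_{t\wedge\tau_n}-Y_{t\wedge\tau_n})\bigr]\le \psi_n(x-y)
\]
and $e^{\lambda(t\wedge\tau_n)}\ge 1$ you only obtain $\Ee\psi_n(X_{t\wedge\tau_n}-Y_{t\wedge\tau_n})\le \psi_n(x-y)$, \emph{not} $\psi_n(x-y)e^{-\lambda t}$. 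Getting the factor $e^{-\lambda t}$ would require $e^{\lambda(t\wedge\tau_n)}\ge e^{\lambda t}$, which fails on $\{\tau_n<t\}$.

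The fix is to keep the exponential through the Fatou step rather than discard it early. On $\{T>t\}$ the c\`adl\`ag process $s\mapsto X_s-Y_s$ is strictly positive on $[0,t]$, hence bounded away from $0$ and (by non-explosion of both marginals) bounded above together with $X_s\vee Y_s$; thus for $n$ large one has $\tau_n>t$, the integrand equals $e^{\lambda t}\psi_n(X_t-Y_t)$, and its $\liminf_n$ is $e^{\lambda t}\psi(X_t-Y_t)$. On $\{T\le t\}$ one has $X_t=Y_t$ and $\psi(0)=0$, so $e^{\lambda t}\psi(X_t-Y_t)=0$ is trivially dominated by the (nonnegative) $\liminf$. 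Fatou then yields $e^{\lambda t}\,\Ee\psi(X_t-Y_t)\le \psi(x-y)$ directly. Note that this also bypasses a second delicacy in your write-up: your Fatou step applies $\liminf$ to $\psi_n$ evaluated at a \emph{moving} argument $X_{t\wedge\tau_n}-Y_{t\wedge\tau_n}$, which does not immediately give $\psi(X_t-Y_t)$; the argument above freezes the point at $X_t-Y_t$ on $\{T>t\}$ before taking the $\liminf$.
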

\begin{proof}
This lemma follows from the  preserving order property for the coupling process $(X_t,Y_t)_{t\ge0}$ associated with the coupling operator $\tilde L$ proved in Corollary \ref{L:order}, and the arguments in part (2) of  the proof for \cite[Theorem 3.1]{LW18}. So, we omit the details here.
\end{proof}

\subsection{Proofs of the main results}

Now, we are in a position to prove Theorems $\ref{Th:W}$ and \ref{Th:W1}.

\begin{proof}[Proof of Theorem $\ref{Th:W}$] (1) We first verify the assertion when (A2) is satisfied. At the beginning we will prove it under the assumption that \begin{equation}\label{e:sspp1}
\mu_z(\R_+)\ge C_*z^{-\alpha},\quad z\in (0,\kappa].\end{equation} In this case, \eqref{e:jump} is reduced into
$$\gamma_2(x) \ge k_3(x-y)^{\beta},\quad 0\le x-y\le l_0,$$ which is equivalent to  \begin{equation}\label{e:sspp}\gamma_2(x) \ge k_3x^{\beta},\quad 0\le x\le l_0.\end{equation} Throughout the proof, without loss of generality we may and can assume that $l_0\ge1$ and $\kappa\in (0,1]$.
According to the definition \eqref{generator} of the coupling operator $\tilde L$, we
know that for any $f\in C^2(\R_+)$ and any $x>y\ge0$,
\begin{align*}\tilde L f(x-y)=&\frac{1}{2} \gamma_2(y)\big[f ((x-y)+(x-y)_\kappa
)+f((x-y)-(x-y)_\kappa)-2f(x-y)\big]\\
&\qquad\quad\times\mu_{(x-y)_\kappa}(\R_+)\\
&+\left(\gamma_2(x)-\gamma_2(y)\right) \int_0^\infty
(f(x-y+z)-f(x-y)-f'(x-y)z)\,\nu(\d z)\\
&+\left(\gamma_0(x)-\gamma_0(y)\right)f'(x-y)+\frac{1}{2}(\sqrt{\gamma_1(x)}+\sqrt{\gamma_1(y)})^2f''(x-y).\end{align*}

In the following, we will take $f$ to be the function $\psi$ defined
by \eqref{lem-test-funct.1}, where the constants $c_1,c_2>0$ and the
function $g$ are determined later.
According to Lemma \ref{lem-test-funct}(1), $\psi''(r)\le0$ for all $r>0$, and so by the mean value theorem, for any $x>y\ge0$ and $z>0$,
$$\psi(x-y+z)-\psi(x-y)-\psi'(x-y)z\le 0;$$ moreover, thanks to Lemma \ref{lem-test-funct}(2),
for $0<x-y\le l_0 $ and $0<z\le l_0$, we see
$$\psi(x-y+z)-\psi(x-y)-\psi'(x-y)z\le \frac{1}{2}\psi''(x-y)z^2+\frac{1}{6}\psi'''(x-y)z^3$$
and
$$ \psi ((x-y)+(x-y)_\kappa
)+\psi((x-y)-(x-y)_\kappa)-2\psi(x-y)\le \psi''(x-y)(x-y)_\kappa^2.$$
Then, by using all the estimates above,
\eqref{e:sspp1}, the increasing property of $\gamma_2(x)$ and \eqref{e:drift},  for any $c_0\in (0,1/l_0]\subset (0,1]$ (whose value will be chosen later), and any $x>y\ge0$ with
$0<x-y\le l_0$,
\begin{align*} \tilde L \psi(x-y)\le &
\frac{C_*}{2}\gamma_2(y)\psi''(x-y)(x-y)_\kappa^{2-\alpha}\\
&+(\gamma_2(x)-\gamma_2(y))\int_0^{c_0(x-y)} \left(\psi(x-y+z)-\psi(x-y)-\psi'(x-y)z\right)\,\nu(\d z)\\
&+\Phi_1(x-y)\psi'(x-y)+\frac{1}{2}(\sqrt{\gamma_1(x)}+\sqrt{\gamma_1(y)})^2\psi''(x-y)\\
\le &\frac{C_*}{2}\gamma_2(y)\psi''(x-y)(x-y)_\kappa^{2-\alpha}\\
&+(\gamma_2(x)-\gamma_2(y))\psi''(x-y)\cdot \frac{1}{2}\int_0^{c_0(x-y)} z^2\,\nu(\d z)\\
&+\frac{ c_0(x-y)(\gamma_2(x)-\gamma_2(y))}{3}\psi'''(x-y) \cdot
\frac{1}{2}\int_0^{c_0(x-y)} z^2\,\nu(\d z)\\
&+\Phi_1(x-y)\psi'(x-y)\\
\le &-\frac{C_*c_2}{2}\frac{\kappa^{2-\alpha}}{l_0^{2-\alpha}}\gamma_2(y)g'(x-y)e^{-c_2g(x-y)}(x-y)^{2-\alpha}\\
&-\frac{c_2g'(x-y)e^{-c_2g(x-y)}}{2}\left(\int_0^{c_0(x-y)} z^2\,\nu(\d z)\right)(\gamma_2(x)-\gamma_2(y))\\
&\qquad\times\left[1-\frac{c_0(x-y)}{3}\left(c_2g'(x-y)-\frac{g''(x-y)}{g'(x-y)}\right)\right]\\
&+\Phi_1(x-y)(c_1+e^{-c_2g(x-y)}),
\end{align*} where in the second inequality we used the fact that $\psi'''(r)\ge0$ for all $r\in (0,2l_0]$.

Next, we choose $$g(r)=r^{\alpha-\beta}+c_3\int^r_0\Phi_1(s)s^{\alpha-\beta-2}\,\d s$$ and
   $$c_0=\min\left\{\frac{1}{l_0},\frac {1}{\sup_{0<r\le l_0} \frac{-rg''(r)}{g'(r)}}\right\},\quad c_2=\frac{\sup_{0<r\le l_0} \frac{-rg''(r)}{g'(r)}}{\sup_{0<r\le l_0} r {g'(r)}},\quad  c_1=e^{-c_2g(l_0)}.$$ Note that, by assumptions, we know that $g(r)$ is well defined. Since $0<\alpha-\beta\le 1$, according to Lemma \ref{Lem-derivatives}, we know that $c_0, c_2\in (0,\infty)$. Then, due to \eqref{e:sspp},
\begin{align*} \tilde L \psi(x-y)\le &
 -\frac{C_*c_2}{2}\frac{\kappa^{2-\alpha}}{l_0^{2-\alpha}}\gamma_2(y)g'(x-y)e^{-c_2g(x-y)}(x-y)^{2-\alpha}\\
&-\frac{C_*c_2c_0^{2-\alpha}}{6} (\gamma_2(x)-\gamma_2(y))g'(x-y)e^{-c_2g(x-y)}(x-y)^{2-\alpha}\\
&+2\Phi_1(x-y)e^{-c_2g(x-y)}\\
\le& - \frac{C_*c_2}{2}\min\left\{\frac{\kappa^{2-\alpha}}{l_0^{2-\alpha}}, \frac{c_0^{2-\alpha}}{3} \right\}\gamma_2(x)g'(x-y)e^{-c_2g(x-y)}(x-y)^{2-\alpha}\\
&+2\Phi_1(x-y)e^{-c_2g(x-y)}\\
\le& - \frac{C_*c_2k_3}{2}\min\left\{\frac{\kappa^{2-\alpha}}{l_0^{2-\alpha}}, \frac{c_0^{2-\alpha}}{3} \right\} g'(x-y)e^{-c_2g(x-y)}(x-y)^{2+\beta-\alpha}\\
&+2\Phi_1(x-y)e^{-c_2g(x-y)}.
\end{align*}
Furthermore, taking $$c_3=2\left[\frac{C_*c_2k_3}{2}\min\left\{\frac{\kappa^{2-\alpha}}{l_0^{2-\alpha}}, \frac{c_0^{2-\alpha}}{3} \right\}\right]^{-1}$$
and recalling
$$g'(r)=(\alpha-\beta)r^{\alpha-\beta-1}+c_3\Phi_1(r)r^{\alpha-\beta-2},$$ we arrive at that for any $x,y\in \R_+$ with $0<x-y\le l_0$,
 \begin{equation}\label{test-1}\begin{split}  \tilde L \psi(x-y)
 &\le
- \frac{C_*c_2k_3(\alpha-\beta)}{2}\min\left\{\frac{\kappa^{2-\alpha}}{l_0^{2-\alpha}}, \frac{c_0^{2-\alpha}}{3} \right\}  e^{-c_2g(x-y)}(x-y) \\
&\le - \frac{C_*c_2k_3(\alpha-\beta)}{2}\min\left\{\frac{\kappa^{2-\alpha}}{l_0^{2-\alpha}}, \frac{c_0^{2-\alpha}}{3} \right\}  e^{-c_2g(l_0)}(x-y) .\end{split}
\end{equation}

On the other hand, for any $x-y>l_0$, according to  all the estimates above  for the function $\psi$ and \eqref{e:drift},
 \begin{equation}\label{test-2}\tilde L \psi(x-y)\le -k_2(x-y)\psi'(x-y)\le -\frac{k_2\psi'(2l_0)}{2}(x-y),\end{equation} where in the last inequality we used the fact that $\psi''\le0$ and the definition of $\psi$.

According to \eqref{test-1}, \eqref{test-2} and Lemma \ref{lem-test-funct}(3), we know that
for any  $0<y<x$,
$$ \tilde L \psi(x-y)\le -\lambda\psi(x-y),$$ where
$$\lambda=\frac{1}{1+c_1}\min\left\{\frac{C_*c_2k_3(\alpha-\beta)}{2}\min\left\{\frac{\kappa^{2-\alpha}}{l_0^{2-\alpha}}, \frac{c_0^{2-\alpha}}{3} \right\}  e^{-c_2g(l_0)}, \frac{k_2\psi'(2l_0)}{2}\right\}.$$
This along with Lemma \ref{lem-trans} yields that for any $t>0$ and $x, y\in \R_+$,
$$
W_{\psi} (P_t(x,\cdot), P_t(y,\cdot))\le \psi(|x-y|)e^{-\lambda t}.
$$ Hence, the required assertion follows from the inequality above and Lemma \ref{lem-test-funct}(3).

When \begin{equation}\label{e:sspp1}\gamma_2(x)-\gamma_2(y) \ge k_3(x-y)^{\beta},\quad 0\le x-y\le l_0,\end{equation} one can follow the arguments above to obtain the desired assertion. Indeed, in this case we can get rid of the term involved $\mu_{(x-y)_\kappa}(\R_+)$ in estimates for  $\tilde L \psi(x-y)$ for  any $x,y\in \R_+$ with $0<x-y\le l_0$, since this term is non-positive. We also note that under \eqref{e:sspp1} we can also directly apply the coupling operator $L^*$ and the associated coupling process mentioned in Remark \ref{remark2.4}; however, such coupling can not deal with  the case that \eqref{e:sspp} is satisfied. This explains the reason why we adopt the refined basic coupling for the non-local part of the operator $L$, rather than simply apply the synchronous coupling.

(2) We next verify the assertion when (A1) is satisfied. Let $\psi$
be the function defined by \eqref{lem-test-funct.1}.  Then, we get
from estimates for $\psi$, \eqref{e:diff}
and \eqref{e:drift} that,  for any $x>y$ with $0<x-y\le l_0$,
\begin{align*}\tilde L \psi(x-y)\le & (\gamma_0(x)-\gamma_0(y))\psi'(x-y)+\frac{k_3}{2}(x-y)^\beta\psi''(x-y)\\
\le & \Phi_1(x-y)\psi'(x-y)+ \frac{k_3}{2}(x-y)^\beta\psi''(x-y),
\end{align*}  where in the first inequality we used the fact that
$$(\sqrt{\gamma_1(x)}+\sqrt{\gamma_1(y)})^2\ge k_3(x-y)^\beta,\quad 0\le x-y\le l_0,$$ thanks to \eqref{e:diff}. Furthermore, we choose $$g(r)=r^{2-\beta}+c_3\int^r_0\Phi_1(s)s^{-\beta}\,\d s.$$ Similarly, with possible choice of constants $c_1$, $c_2$ and $c_3$
in the definition of $\psi$, one can follow the argument in part (1) to verify
the desired assertion. The details are omitted here.
\end{proof}

\begin{proof}[Proof of Theorem $\ref{Th:W1}$] For simplicity, we only verify the case that (A2) is satisfied and that $
\mu_z(\R_+)\ge C_*z^{-\alpha}$ for all $z\in (0,\kappa]$ (i.e., \eqref{e:sspp} holds), since one can prove the desired assertion similarly (and even easier) for other cases.

Without loss of generality, we assume that $l_0\ge1$ and $\kappa\in (0,1]$.
  For any $n\ge1$, define $f_n\in C^2(\R_+)$ such that $$f_n(r)\begin{cases} =\psi(r),&\quad 0<r\le 1/(n+1),\\
\le 1+b\left(\frac{r}{1+r}\right)^\theta+\psi(r),&\quad 1/(n+1)<r\le 1/n,\\
=1+b\left(\frac{r}{1+r}\right)^\theta+\psi(r),&\quad r\ge 1/n,\end{cases}$$ where $b>0$ chosen later, $\theta=(\alpha-\beta)/2\in (0,1)$, and $\psi$ is defined by \eqref{lem-test-funct.1} (which is the one in part (1) of the proof of Theorem $\ref{Th:W}$ with some modification on the associated constant $c_3$).
We will verify that there exists a constant $\lambda>0$ such that for any $n\ge1$ and $x-y>1/n$, \begin{equation}\label{e:total}\tilde L f_n(x-y)\le - \lambda
f_n(x-y).\end{equation}
If \eqref{e:total} holds, then, the assertion follows from Lemma \ref{lem-trans} and the fact that $$\liminf_{n\to\infty} f_n(x,y)=\I_{\{x\neq y\}}(1+b(|x-y|/(1+|x-y|))^\theta+\psi(|x-y|)\asymp \I_{\{x\neq y\}}(1+ |x-y|).$$ Here  and in what follows, for any nonnegative functions $f,g$, $f\asymp g$ means that there is a constant $c\ge 1$ such that $c^{-1} f(x)\le g(x)\le c f(x)$ on the region of $x$.

In the following, let $\psi_0(r)=b (r/(1+r))^\theta$, and $l^*_0\in (0,\kappa]$ determined later. Then, by \eqref{e:drift}, for $n\ge1$ and $1/n\le l_0^*\le x-y\le l_0$,
\begin{align*}\tilde L \psi_0(x-y)\le &\Phi_1(x-y)\psi_0'(x-y)\le b\theta\Phi_1(x-y)(1+l_0^*)^{-2}{(l_0^*/(1+l_0^*))}^{\theta-1},
\end{align*} where in the first inequality  we used the fact that $\psi_0''\le 0$ and the definition of the coupling operator $\tilde L$ given by \eqref{generator}. On the other hand, with the same function $\psi$ (with the same function $g$ and the constants $c_0,c_1,c_2)$ in  the proof of Theorem $\ref{Th:W}$, we find that for $n\ge1$ and $1/n\le l_0^*\le x-y\le l_0$,
\begin{align*}\tilde L \psi(x-y) \le &- \frac{C_*c_2k_3}{2}\min\left\{\frac{\kappa^{2-\alpha}}{l_0^{2-\alpha}}, \frac{c_0^{2-\alpha}}{3} \right\} g'(x-y)e^{-c_2g(x-y)}(x-y)^{2+\beta-\alpha}\\
&+2\Phi_1(x-y)e^{-c_2g(x-y)}\end{align*}
Combining both estimates together, we obtain that for any $n\ge1$ and $1/n\le l_0^*\le x-y\le l_0$,
\begin{align*}\tilde L f_n(x-y) \le &- \frac{C_*c_2k_3}{2}\min\left\{\frac{\kappa^{2-\alpha}}{l_0^{2-\alpha}}, \frac{c_0^{2-\alpha}}{3} \right\} g'(x-y)e^{-c_2g(x-y)}(x-y)^{2+\beta-\alpha}\\
&+ \Phi_1(x-y)e^{-c_2g(x-y)} (2+2b\theta e^{c_2g(l_0)} {l_0^*}^{\theta-1}),\end{align*} where in the inequality above we used the fact that $l_0^*\in (0,1)$. Furthermore, choosing $b=2^{-1}e^{-c_2g(l_0)}$ and noticing that $\theta\in(0,1]$, we arrive at
\begin{align*}\tilde L f_n(x-y) \le &- \frac{C_*c_2k_3}{2}\min\left\{\frac{\kappa^{2-\alpha}}{l_0^{2-\alpha}}, \frac{c_0^{2-\alpha}}{3} \right\} g'(x-y)e^{-c_2g(x-y)}(x-y)^{2+\beta-\alpha}\\
&+ \Phi_1(x-y)e^{-c_2g(x-y)} (2+{l_0^*}^{\theta-1}). \end{align*}  Now, replacing the constant $c_3$ in the proof for Theorem $\ref{Th:W}$ by
$$c_3=(2+{l_0^*}^{\theta-1})\left[\frac{C_*c_2k_3}{2}\min\left\{\frac{\kappa^{2-\alpha}}{l_0^{2-\alpha}}, \frac{c_0^{2-\alpha}}{3} \right\}\right]^{-1},$$  we can get that for any $n\ge1$ and $1/n\le l_0^*\le x-y\le l_0$,
$$\tilde L f_n(x-y)\le - \frac{C_*c_2k_3(\alpha-\beta)}{2}\min\left\{\frac{\kappa^{2-\alpha}}{l_0^{2-\alpha}}, \frac{c_0^{2-\alpha}}{3} \right\}  e^{-c_2g(l_0)}(x-y) .$$

Next, we  turn to the case that $1/n<x-y\le l_0^*$ with any $n\ge 1$. According to the definition of the function $\psi$ and the proof of Theorem $\ref{Th:W}$, we know that \eqref{test-1} still holds true for any $1/n<x-y\le l_0^*$. On the other hand, by the fact that $\psi_0''\le 0$, for any $1/n<x-y\le l_0^*\le \kappa$,
\begin{align*}\tilde L \psi_0(x-y)\le &\frac{1}{2} \gamma_2(y)\Big[\psi_0 ((x-y)+(x-y)_\kappa
)+\psi_0((x-y)-(x-y)_\kappa)-2\psi_0(x-y)]\\
&\qquad\quad\times\mu_{x-y}(\R_+)\\
&+\left(\gamma_2(x)-\gamma_2(y)\right) \int_0^{x-y}
(\psi_0(x-y+z)-\psi_0(x-y)-\psi_0'(x-y)z)\,\nu(\d z)\\
&+\left(\gamma_0(x)-\gamma_0(y)\right)\psi_0'(x-y)\\
\le &\frac{C_*}{2}\gamma_2(y)\psi_0''(x-y)(x-y)^{2-\alpha}+\frac{1}{2}\left(\gamma_2(x)-\gamma_2(y)\right) \psi_0''(2(x-y))\int_0^{x-y}z^2\,\nu(\d z)\\
&+\left(\gamma_0(x)-\gamma_0(y)\right)\psi_0'(x-y)\\
\le& -b\theta C_* k_3(1-\theta)2^{\theta-2}(1+2l_0^*)^{-\theta-2} (x-y)^{\beta-\alpha+\theta} +b\theta \Phi_1(x-y)(x-y)^{\theta-1}\\
\le& b\theta(x-y)^{\beta-\alpha+\theta}\left[-\frac{C_* k_3 (1-\theta) }{108}+  \sup_{0<r\le l^*_0}(\Phi_1(r)r^{\alpha-\beta-1})\right]. \end{align*} Here, in the second inequality we used the facts that $l_0^*\le \kappa$,
$$\psi_0(r+s)+\psi_0(r-s)-2\psi_0(r)s\le \psi_0''(r) s^2$$ and
$$\psi_0(r+s)-\psi_0(r)-\psi_0'(r) s\le \frac{s^2}{2}\psi''_0(2r)$$ for any $ 0<s\le r;$  the third inequality follows from \eqref{e:sspp} and  \eqref{e:drift};
and the fourth inequality is deduced from the fact that $\theta, l_0^*\in (0,1)$. Since $\limsup_{r\to0} \Phi_1(r)r^{\alpha-\beta-1}=0$, we can find $l_0^*\in (0,1]$ small enough such that for all $n\ge1$ and  $1/n<x-y\le l_0^*\le 1$,
$$\tilde L \psi_0(x-y) \le -\frac{ b\theta C_* k_3(1-\theta) }{216} (x-y)^{\beta-\alpha+\theta}.$$
Hence, for any $n\ge1$ and $1/n<x-y\le l_0^*$,
$$\tilde L f_n(x-y)\le -\frac{ b\theta C_* k_3 (1-\theta)}{216} {l_0^*}^{-(\alpha-\beta)/2}.$$

Finally, according to \eqref{test-2} and the facts that $\psi_0'\ge0$ and $\psi_0''\le0$, we find that for any $x-y>l_0$,
$$\tilde L f_n(x-y)\le -k_2(x-y)\psi'(x-y)\le -\frac{k_2\psi'(2l_0)}{2}(x-y).$$

Combining all the estimates above for $\tilde L f_n(x-y)$, we can obtain \eqref{e:total}, thanks to the fact that $f_n(r) \asymp (1+r)$ for all $r\neq0$. The proof is complete.   \end{proof}

\begin{proof}[Proof of Corollary $\ref{Cor:W}$]
By \cite[Proposition 2.3]{FL10}, we see that under assumptions of Theorem \ref{Th:W} (or Theorem \ref{Th:W1}), there exist constants $C_1, K>0$ such that for all $x\in \R_+$ and $t>0$,
$$\Ee^{x}  X_t \le C_1(1+x)e^{Kt}.$$ It then follows that $\delta_xP_t\in\mcr{P}_1$ and hence
$\mu P_t\in\mcr{P}_1$ for each $\mu\in\mcr{P}_1$, where $\mcr{P}_1$ is the space of all probability measures on $(\R_+,\mcr{B}(\R_+))$ with the first finite moment.
With this at hand, the proof of Corollary $\ref{Cor:W}$ essentially follows from that of \cite[Corollary 1.8]{LW16}. We omit the details here.
 \end{proof}

\begin{proof}[Proof of Theorem $\ref{Cor:var1}$] Similar to the proof of Theorem $\ref{Th:W1}$, we only verify the case that (A2) is satisfied and that $
\mu_z(\R_+)\ge C_*z^{-\alpha}$ for all $z\in (0,\kappa]$ with $C_*,\kappa>0$ and $\alpha\in (0,2)$ (i.e., \eqref{e:sspp} holds). To verify the desired assertion, we will use the following test function
\begin{equation}\label{lem-test-funct.222}
\psi(r)= \!\begin{cases}
  c_1 r+ \int_0^r e^{-c_2 g(s)}\, \d s ,& r\in [0, 2l_0],\\
 \psi(2l_0)+ A\int_0^{r-2l_0}\frac{1}{\Phi_2(Bs+2l_0)}\,\d s+ \delta A\int_0^{r-2l_0}\frac{1}{\Phi_2(s+2l_0)}\,\d s, & r\in(2l_0,\infty),
  \end{cases}
  \end{equation} where $A=\frac{\psi'(2l_0)\Phi_2(2l_0)}{\delta+1}$, $B=-\frac{\psi''(2l_0)\Phi_2(2l_0)(\delta+1)}{\psi'(2l_0)\Phi_2'(2l_0)}-\delta$ and $\delta>0$ is sufficient small such that $B>0$. Note that the modification between the test function $\psi$ given by \eqref{lem-test-funct.222} and the one in the the proof of Theorem \ref{Th:W} is only made for $r\in (2l_0,\infty)$. It is easy to see that $\psi\in C^2(\R_+)$ such that  $\psi'>0$ and $\psi''<0$ on $\R_+$; moreover, by $\int_1^\infty \frac{1}{\Phi_2(s)}\,\d s<\infty,$ $\psi\in C_b(\R_+)$.

  With the test function $\psi$ above, we can define a sequence of functions $\{f_n\}_{n\ge1} \subset C^2(\R_+)$ such that
  $$f_n(r)\begin{cases} =\psi(r),&\quad 0<r\le 1/(n+1),\\
\le 1+b\left(\frac{r}{1+r}\right)^\theta+\psi(r),&\quad 1/(n+1)<r\le 1/n,\\
=1+b\left(\frac{r}{1+r}\right)^\theta+\psi(r),&\quad r\ge 1/n, \end{cases}$$ where $b$ and $\theta$ are the same as in the proof of Theorem \ref{Th:W1}. In particular, $\{f_n\}_{n\ge1}$ is
  uniformly  bounded, i.e.\ $\sup_{n\ge1}\|f_n\|_\infty<\infty$.

  Following the proof of Theorem \ref{Th:W1}, we have the same estimate for $\tilde L f_n(x-y)$ when $1/n\le x-y\le l_0$. Next, we consider the estimate for $x-y>l_0$. First, by the facts that $\psi_0'\ge0$, $\psi_0''\le0$ and $\psi''\le0$, for any $x-y\ge 2l_0$, we have
$$\tilde L f_n(x-y) \le -\Phi_2(x-y) \psi'(x-y) \le -\delta A.
$$ On the other hand, for any $l_0<x-y\le 2l_0$,
$$\tilde L f_n(x-y) \le -\Phi_2(x-y) \psi'(x-y) \le -c_1\Phi_2(x-y)\le -c_1\Phi_2(l_0).$$
Combining with all the estimates above, we obtain that there is a constant $\lambda>0$ such that for all $x,y\in \R_+$ with $x>y$ and $n\ge1$,
  $$\tilde L f_n(x,y)\le - \lambda.$$ This along with the fact that $f_n(r)\asymp \I_{(0,\infty)}$ for all $n\ge 1$ and Lemma \ref{lem-trans} in turn yields that there exists a positive constant $C$ so that for all $t>0$ and $x,y\in \R_+$,
$$\|P_t(x,\cdot)-P_t(x,\cdot)\|_{\rm Var}\le C e^{-\lambda t}.$$ Hence, the desired assertion follows from the proof of Corollary \ref{Cor:W}.
    \end{proof}

Next, we turn to the

\begin{proof}[Proof of Theorem $\ref{t1}$] Condition \eqref{e:drift00} means that \eqref{e:drift} holds with $$\Phi_1(r)=k_1r\log \left(\frac{4l_0}{r}\right).$$

When (1) holds, Assumption (A1) in Theorem \ref{Th:W} is satisfied; see the remarks below Theorem \ref{Th:W1}.

When  $\nu(\d z)\ge c_0\I_{\{0<z\le 1\}}z^{-1-\alpha}\,\d z$ for some $c_0>0$ and $\alpha\in (0,2)$, by \cite[Example 1.2]{LW18}, we know that $\mu_z(\R_+)\ge C_*z^{-\alpha}$ for all $z\in (0,\kappa]$ with some $C_*,\kappa>0$. Then, that condition (2) holds implies that Assumption (A2) in Theorem \ref{Th:W} is satisfied too; see also the remarks below Theorem \ref{Th:W1}.

For $\gamma_2(x)$ given in (3), we have for all $x>y \ge0$,
$$\gamma_2(x)-\gamma_2(y)\ge b_2(x^{r_2}-y^{r_2}).$$
Since $r_2\in [1,\alpha)$ with $\alpha\in (1,2)$,  for all $x>y\ge0$,
$$x^{r_2}-y^{r_2}\ge x^{r_2-1} (x-y)\ge (x-y)^{r_2}.$$
Hence,  Assumption (A2) in Theorem \ref{Th:W} is satisfied.
With all the conclusions above, we can obtain the desired assertion from Theorems \ref{Th:W}, \ref{Th:W1} and \ref{Cor:var1}, as well as Corollary \ref{Cor:W}.
  \end{proof}

Finally, we present the

\begin{proof}[Proof of Example $\ref{Exm}$] (1) For any $x>y>0$,
\begin{align*}\gamma_0(x)-\gamma_0(y)=&[b_1x\log (1+1/x)-b_1y\log(1+1/y)]-[b_2x-b_2y]\\
\le& b_1(x-y)\log(1+1/x)-b_2(x-y)\\
\le& b_1(x-y)\log(1+1/(x-y))-b_2(x-y).\end{align*} This implies that  \eqref{e:drift} holds with $\Phi_1(r)=b_1r\log(1+1/r)$ and $k_2=b_2/2$, by setting $l_0>0$ large enough such that $b_1\log(1+l_0^{-1})\le b_2/2$. We note that, by some elementary calculations, $\Phi_1(r)=b_1r\log(1+1/r)$ satisfies all the assumptions in Theorem \ref{Th:W}.

(2) Note that for all $\delta>1$ and $x>y\ge0$,
$$x^\delta-y^\delta\ge x^{\delta-1}(x-y)\ge (x-y)^\delta.$$ Then,
\begin{align*}\gamma_0(x)-\gamma_0(y)=&[b_1x -b_1y ]-[b_2x^\delta-b_2y^\delta]\\
\le& b_1(x-y) -b_2(x-y)^\delta.\end{align*} Hence, we know that \eqref{e:drift1} holds with $\Phi_1(r)=b_1r$,  $\Phi_2(r)=b_2 r^\delta/2$ and $l_0=(2b_1/b_2)^{1/(\delta-1)}.$

(3) We consider the function $x\mapsto e^{cx^\delta}$ with $c,\delta>0$ on $\R_+$. For any $x,y\in \R_+$ with $x-y\ge l_0$ and some $l_0>0$ large enough,
$$e^{cx^\delta}-e^{cy^\delta}\ge \frac{e^{cx^\delta}}{x}(x-y)\ge c_0x^\theta (x-y)\ge c_0(x-y)^{1+\theta},$$ where $c_0$ and $\theta$ are positive constants. Here, in the first inequality we used the fact that the function $x\mapsto \frac{e^{cx^\delta}}{x}$ is increasing for $x>0$ large enough, and the second inequality follows from the fact that $\frac{e^{cx^\delta}}{x} \ge c_0x^\theta $ for $x>0$ large enough, where $\theta>0$ can be chosen to be any positive constant. With aid of this inequality, we can prove the desired assertion by following the argument in (2).  \end{proof}

\noindent \textbf{Acknowledgements.}
The research of Jian Wang is supported by the National Natural Science Foundation of China (No.\ 11831014),
the Program for Probability and Statistics:
Theory and Application (No.\ IRTL1704)
and the Program for Innovative Research Team in Science and Technology
in Fujian Province University (IRTSTFJ).

\end{document}